\documentclass[reqno, 10pt]{amsart}
\usepackage{amssymb}
\usepackage{amsfonts}
\usepackage{txfonts}
\usepackage{mathrsfs}
\usepackage{mathrsfs}
\usepackage{amssymb,url}
\usepackage{graphicx}
\usepackage{cite}
\usepackage{enumerate}

\allowdisplaybreaks[4]
\theoremstyle{plain}
\newtheorem{theorem}{Theorem}

\newtheorem{lemma}{Lemma}

\newtheorem{proposition}{Proposition}

\newtheorem{definition}{Definition}

\usepackage[colorlinks,linkcolor=blue,urlcolor=red,linktocpage]{hyperref}

\begin{document}

\title{The Minkowski problem of $p$-affine dual curvature measures}

\author[Y. Lin]{Youjiang Lin}
\address[Y. Lin]{School of Mathematical Sciences, Hebei Normal University, Shijiazhuang, Hebei, 050024, China}
 \email{\href{mailto: YOUJIANG LIN
<yjlin@hebtu.edu.cn>}{yjlin@hebtu.edu.cn}}

\author[Y. Wu]{Yuchi Wu}
\address[Y. Wu]{School of Mathematical Sciences,  Shanghai Key Laboratory of PMMP, East China Normal University, Shanghai 200241, China}
 \email{\href{mailto: Yuchi Wu
<ycwu@math.ecnu.edu.cn>}{ycwu@math.ecnu.edu.cn}}


\begin{abstract}
For $p\in (-\infty,0)\cup(0,1)$  and a convex body $K\subset\mathbb{R}^n$ with the origin in its interior, we construct the family of $p$-affine dual curvature measures $\mathcal{I}_p(K,\cdot)$ with respect to $K$.
The affine-invariant measure $\widetilde{C}_{n-1}^{\mathrm{a}}(K, \cdot)$ given in the paper \cite{CLWX25} is the limit case of $\mathcal{I}_p(K,\cdot)$ as $p\rightarrow 1^-$. The classical cone-volume measure is the limit case of the affine measures $2|p|\mathcal{I}_p(K,\cdot)/(n^2V(\mathrm{I}_pK))$ when $p\rightarrow 0$ and $V(K)=2$, where $\mathrm{I}_pK$ denotes the $L_p$ intersection body of $K$. The Minkowski problems for the $p$-affine dual curvature measures are proposed and studied. Specifically, we give a sufficient condition for the existence of a solution to the even Minkowski problem for $p$-affine dual curvature measure. Moreover, a necessary condition is given when $p\in (0,1)$. The smooth case of this Minkowski problem is equivalent to solving a new type of partial differential equations with respect to $p$-cosine transforms.
\end{abstract}

\keywords{ Affine Minkowski problems\and $L_p$ intersection bodies \and Affine dual curvature measures \and $p$-cosine transforms \and Nonlinear partial differential equations }
\subjclass[2020]{52A20, 52A30, 52A40}

\footnote{The First author is supported by National Natural Science Foundation of China NSFC 12371137 and NSFC 11971080; The second author is supported by  Science and Technology Commission of Shanghai Municipality 22DZ2229014 and Youth Fund of the National Natural Science Foundation of China NSFC 12401067.}

\maketitle	
	
	\section{Introduction}
	\label{intro}
Minkowski problem is one of the main problems in the Brunn-Minkowski theory.
The classical Minkowski problem states that: what are the necessary and sufficient conditions on a finite Borel measure $\mu$ on $S^{n-1}$ so that $\mu$ is the surface area measure $S(K,\cdot)$ of a convex body $K$ in $\mathbb{R}^n$? Minkowski \cite{Minkowski97,Minkowski03} posed and fully solved the discrete version of this problem. The general case was solved independently by Aleksandrov\cite{Aleksandrov38,Aleksandrov39}  and Fenchel-Jessen \cite{FJ38}.

The $L_p$ surface area measure introduced by Lutwak \cite{Lutwak93} is a fundamental notion in the $L_p$ Brunn-Minkowski theory, which contains the classical surface area measure (the case $p=1$) and the cone-volume measure (the case $p=0$)  as two important special cases. The $L_p$ Minkowski problem, posed by
Lutwak (see, e.g., \cite{Lutwak93}), asks for necessary and sufficient conditions that would guarantee
that a given measure on the unit sphere would be the $L_p$ surface area measure of a convex
body; see, e.g., \cite{Lutwak93,LO95,LYZ04,CW06,BLYZ13,Chen06,HLX15,Zhu1501}. The $L_p$ Minkowski problem is interesting for all real $p$ and contains the classical Minkowski problem (the case $p=1$) and the log-Minkowski problem (the case $p=0$) as two important special cases.  The solutions to the $L_p$ Minkowski problem have important application to affine isoperimetric inequalities; see, e.g.,\cite{Zhang99,LYZ00,LYZ02,LYZ021,CLYZ09,HS09,HS0901,HSX12}.

Corresponding to the Brunn-Minkowski theory, there is a parallel dual Brunn-Minkowski theory that was initiated by Lutwak in 1975 (see, e.g., \cite{Schneider14}). In the remarkable paper \cite{HLYZ16}, Huang-Lutwak-Yang-Zhang discovered a series of fundamental
dual curvature measures of convex bodies in $\mathbb{R}^n$. They studied the corresponding
dual Minkowski problems and proved the existence of solutions to the even dual
Minkowski problems. See
e.g., \cite{BLYZZ19,GHXY19,GHXY20,HLYZ16,HZ18,Zhao17,Zhao18} for recent works for dual Minkowski problems.

Recently, Lutwak-Xi-Yang-Zhang \cite{LXYZ} constructed a family of new geometric measures in integral geometry. They are the differentials of chord-power integrals, and are called simply chord measures. The related Minkowski problem is called the chord Minkowski problem. What's called the $(n-2)$-th Christoffel-Minkowski problem (see e.g. \cite{GM03}) is an important case of the chord Minkowski problem. More achievements on Minkowski's problems can be found in the literatures \cite{BLYZZ20,Livshyts10,HXZ21,LXYZ}.


%

The affine geometry of convex bodies studies geometric functionals that are invariant under affine and centro-affine transformations. During
the past half century, various new affine invariants were studied , see e.g. \cite{Lutwak88,Lutwak96,Meyer00,SW04,Lutwak90,Lei98,LYZ00,LYZ0001,LYZ02,WY10,Zhao16,Zhu15}. Some affine invariants have been characterized by additivity properties, see for example \cite{HP14,Ludwig03,Lutwak91,Ludwig06,Ludwig10}. Cone-volume measure is the only  $\operatorname{SL}(n)$
invariant measure among the $L_p$-surface area measures. Corresponding the log-Minkowski problem was solved by B\"or\"oczky-Lutwak-Yang-Zhang in \cite{BLYZ13} for symmetric convex bodies. For the case where $\mu$ is a discrete non-symmetric measure, the log-Minkowski problem was studied
by Stancu \cite{Stancu02,Stancu03}, Zhu \cite{Zhu14}. Chen-Li-Zhu \cite{CLZ18} established the existence of solutions to the log-Minkowski problem for non-symmetric measures.

Dual affine quermassintegrals were proposed by Lutwak, see \cite{Schneider14}. They were defined by letting $\widetilde{\Phi}_0(K)=V(K)$, and $\widetilde{\Phi}_n(K)=\omega_n$, while for $m=1, \ldots, n-1$,
$$
\widetilde{\Phi}_{n-m}(K):=\frac{\omega_n}{\omega_m}\left(\int_{G(n, m)}V_m(K \cap \xi)^n d \nu_m(\xi)\right)^{1 / n},
$$
where $\omega_k$ denotes the volume of $k$-dimensional unit ball and  $G(n, m)$ denotes the Grassmannian of $m$-dimensional subspaces of $\mathbb{R}^n$.
 Using the dual affine quermassintegrals, Cai-Leng-Wu-Xi \cite{CLWX25} presented the affine invariant measures $\widetilde{C}_{m}^{\mathrm{a}}(K, \cdot)$ (see Section \ref{S2.3} for precise definition) and proved the following variation formulas. For a convex body $K \subset \mathbb{R}^n$ that contains the origin in its interior, and for each $f \in C\left(S^{n-1}\right)$,
$$
\left.\frac{d}{d t}\right|_{t=0}\left[\widetilde{\Phi}_{n-m}\left([K, f]_t\right)^n\right]=n\left(\frac{\omega_n}{\omega_m}\right)^n \int_{S^{n-1}} f(v) d \widetilde{C}_m^{\mathrm{a}}(K, v),
$$
where for sufficiently small $|t|$, the logarithmic Wulff shapes $[K, f]_t$ is given in (\ref{W2.9}).
Cai-Leng-Wu-Xi \cite{CLWX25} attacked Minkowski problems for the  affine-invariant measures $\widetilde{C}_{m}^{\mathrm{a}}(K, \cdot)$. When $m=n$, the Minkowski problem with respect to the measure $\widetilde{C}_{n}^{\mathrm{a}}(K, \cdot)$ is the log-Minkowski problem.

 When $m=n-1$, the dual affine quermassintegral $\widetilde{\Phi}_1$ is closely related to the intersection body $\text {IK }$ (first defined and named in \cite{Lutwak88} by Lutwak) of the star body $K \subset \mathbb{R}^n$, which can be defined by
$$
\rho_{\text {IK }}(u)=V_{n-1}\left(K \cap u^{\perp}\right).
$$
 A simple computation shows that the volume of the intersection body of $K$ is related to $\widetilde{\Phi}_1(K)$ by:
$$
V(\mathrm{I} K)=\frac{\omega_{n-1}^n}{\omega_n^{n-1}} \widetilde{\Phi}_1(K)^n .
$$

For $p\in (-\infty,0)\cup(0,1)$, the {\it $L_p$ intersection body} of an origin-symmetric star body $K\subset \mathbb{R}^n$ is defined as  the star body $\mathrm{I}_p K$ with radial function
\begin{eqnarray}\label{DIP}
\rho\left(\mathrm{I}_p K, x\right)=\left(\frac{1-p}{2}\int_K|x \cdot y|^{-p} d y\right)^{1/p}, \quad x \in \mathbb{R}^{n}\setminus\{o\},
\end{eqnarray}
where $x \cdot y$ is the usual inner product of $x, y \in \mathbb{R}^n$, and integration is with respect to Lebesgue measure. Up to normalization, $L_p$ intersection body equals the polar $L_{-p}$ centroid body given by Lutwak-Zhang \cite{LZ97}.
%
%
%
 Note that the definition of $L_p$ intersection bodies is slightly different from the definition given in \cite{Haberl} by replacing $1/\Gamma(1-p)$ by $(1-p)/2$. When $p\rightarrow 1^-$, the $L_p$ intersection body $\mathrm{I}_pK$ converges to the classical intersection body $\mathrm{I}K$.

In this paper, by computing the variational formula of $L_p$ intersection bodies for $p\in (-\infty,0)\cup(0,1)$, we obtain a class of new affine-invariant measures $\mathcal{I}_p(K,\cdot)$ (see Definition \ref{DIPK}). Specially, for $K \in \mathcal{K}_o^n$, $f\in C\left(S^{n-1}\right)$ and let $K _ { t }$ be the logarithimic family of Wulff shapes generated by $h_K$ and $f$.
Then,
\begin{eqnarray}\label{varfor}
\left.\frac{d}{d t}\right|_{t=0}V\left(\mathrm{I}_pK_t\right)=
		\operatorname{sgn}(p)\int_{S^{n-1}}
f(u)d\mathcal{I}_p(K,u),
\end{eqnarray}
where $\operatorname{sgn}$ denotes the sign function.
The affine-invariant measure $\widetilde{C}_{n-1}^{\mathrm{a}}(K, \cdot)$ given in \cite{CLWX25} is the limit case of $\mathcal{I}_p(K,\cdot)$ when $p\rightarrow 1^-$. Moreover, we prove that the classical cone-volume measure $V_K(\cdot)$ is the limit case of the measure $2|p|\mathcal{I}_p(K,\cdot)/(n^2V(\mathrm{I}_pK))$ when $p\rightarrow 0$ and $V(K)=2$.


\begin{definition}\label{DIPK}
For any spherical Borel set $\eta\subset S^{n-1}$, ${\boldsymbol\alpha}_K^{\ast}(\eta)$ denotes its reverse radial Gauss image (see Section \ref{Sct2} for precise definition). Let $p\in (-\infty,0)\cup (0,1)$ and $K \in \mathcal{K}_o^n$. The {\it $p$-affine dual curvature measure}, $\mathcal{I}_p(K,\cdot)$, is defined by
\begin{eqnarray}\label{IPK}
\mathcal{I}_p(K,\eta)=\frac{1-p}{2|p|}\int_{{\boldsymbol\alpha}_K^{\ast}(\eta)}\rho^{n-p}_K(v)T_{-p}\rho^{n-p}_{\mathrm{I}_pK}(v)dv.
\end{eqnarray}
\end{definition}

Here for $p > - 1 ,$  $T _ { p } f$ denotes the {\it $p$-cosine transform}  of a function $f \in C \left( S ^ { n - 1 } \right)$, which
is defined by
\begin{eqnarray}\label{1.2}
T_{p}f(u)=\int_{S^{n-1}}f(v)|u\cdot v|^{p}dv,~~\text{for}~u\in S^{n-1}.
\end{eqnarray}

By (\ref{DIP}), (\ref{IPK}) and  (\ref{1.2}), we have
\begin{eqnarray}\label{2.3}
\mathcal{I}_p(K,\eta)=\frac{n-p}{|p|}\int_{{\boldsymbol\alpha}_K^{\ast}(\eta)}\rho^{n-p}_K(v)\rho^{p}_{\mathrm{I}_p^2K}(v)dv,
\end{eqnarray}
where $\mathrm{I}_p^2K$ denotes the $L_p$ intersection body of $\mathrm{I}_pK$.
Using (\ref{2.3}) and polar coordinate transformation, we conclude that
\begin{eqnarray}\label{np2}
\mathcal{I}_p(K,\eta)=\frac{(n-p)^2}{|p|} \int_{x \in K, x /|x| \in {\boldsymbol\alpha}_K^{\ast}(\eta)}\rho^p_{\mathrm{I}^2_pK}(x)dx.
\end{eqnarray}

\noindent{\bf Affine Minkowski problem for $\mathcal{I}_p(K,\cdot)$.} Suppose $p\in (-\infty,0)\cup (0,1)$ is fixed. Find necessary and sufficient
conditions of a finite Borel measure $\mu$ on $S^{n-1}$ so that $\mu=\mathcal{I}_p(K,\cdot) $ for some convex body $K\in \mathcal{K}_o^n.$

\medskip

When the measure $\mu$ has a density function $g: S^{n-1}\rightarrow\mathbb{R}$, the Minkowski problem for $\mathcal{I}_p(K,\cdot)$ is equivalent to the study of the following Monge-Amp\`{e}re type equation on $S^{n-1}$:
$$c(n,p)T_{-p}\circ T_{-p}^{(\frac{n-p}{p})}\circ\mathcal{D}^{(-n+p)}(h)\left(\frac {\nabla h+h_\iota }{|\nabla h +h_\iota|}\right)(v)\cdot|\nabla h(v)+h(v)v|^{-p}h(v)\operatorname{det}\left(\nabla^{2}h(v)+h(v)I\right)=g(v),$$
where $h:S^{n-1}\rightarrow(0,\infty)$ is the unknown function to be found, $\nabla h$ and $\nabla ^ { 2 } h$ denote the spherical
gradient and spherical Hessian, $h_\iota(w):=h(w)w,\forall~ w\in S^{n-1}$, $I$ is the identity map, $T_{-p}$ denotes the $(-p)$-cosine transform, and the duality operator $\mathcal{D}$ and the constant $c(n,p)$ are given by
$$\mathcal{D}h(u)=\max_{v\in S^{n-1}}\frac{u\cdot v}{h(v)},\;\;\;{\rm and}\;\;\;c(n,p)=\frac{n-p}{|p|}\left(\frac{1-p}{2(n-p)}\right)^{\frac{n}{p}}.$$

The following theorem gives a sufficient condition for the existence of a solution to the even Minkowski problem for $p$-affine dual curvature measure. We mainly use a variational
approach, where the functional is the sum of the volume of $L_p$ intersection body and an entropy integral. We first prove inequalities about the radial functions of $L_p$ intersection body and intersection body, see Lemma \ref{T2}. Then, we give an estimate of the entropy
integral which is given in \cite{BLYZZ19}, see Lemma \ref{L7.3}. Finally, we
give a  estimate of the volume of $L_p$ intersection bodies for a
carefully chosen barrier convex body.  A finite Borel measure $\mu$ on $S^{n-1}$ is said to satisfy the {\it strict subspace concentration inequality} if, for every subspace $\xi$ of $\mathbb{R}^n$ with $0<\operatorname{dim} \xi<n$,
\begin{eqnarray}\label{1.4}
\frac{\mu\left(\xi \cap S^{n-1}\right)}{ \mu\left(S^{n-1}\right) } < \frac{1}{n}\operatorname{dim} \xi.
\end{eqnarray}

\begin{theorem}
Let $p\in (-\infty,0)\cup (0,1)$ and $\mu$ be a non-zero even finite Borel measure on the unit sphere $S^{n-1}$. If $\mu$ satisfies the strict subspace concentration inequality (\ref{1.4}), then there exists an origin-symmetric convex body $K$ in $\mathbb{R}^n$ such that $\mu=\mathcal{I}_p(K,\cdot)$.
\end{theorem}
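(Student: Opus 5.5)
We use the variational approach of Huang--Lutwak--Yang--Zhang and Böröczky--Lutwak--Yang--Zhang--Zhao \cite{BLYZZ19}, based on the variational formula (\ref{varfor}). For an even positive $f\in C^+_e(S^{n-1})$ let $[f]$ be its Wulff shape (the Aleksandrov body), and define
\[
\Phi(f)=\frac{1}{\mu(S^{n-1})}\int_{S^{n-1}}\log f\,d\mu-\frac{1}{n}\,\mathrm{sgn}(p)\log V(\mathrm{I}_p[f])
\]
(choosing the sign and the normalization so that it matches (\ref{varfor})). We then consider the extremal problem of minimizing (or maximizing, depending on $\mathrm{sgn}(p)$) $\Phi$ over even $f$. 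Since $h_{[f]}\le f$ and $[h_{[f]}]=[f]$, the extremum may be taken over support functions $h_K$ of origin-symmetric bodies $K$. The functional is also scale invariant, because $V(\mathrm{I}_p(\lambda K))=\lambda^{n(n-p)/p}V(\mathrm{I}_pK)$; to make this exact we adjust the coefficient to $\frac{p}{n(n-p)}$. We may therefore normalize, for example by $V(\mathrm{I}_pK)$ fixed, or by $V(K)=\omega_n$, or by the $\mu$-entropy.

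\textbf{Step 1: the extremizer solves the problem.} Suppose $K_0$ is an extremizer in the interior of $\mathcal{K}_o^n$ (origin-symmetric). For even $g$, perturb $f_t=h_{K_0}e^{tg}$, so that $[f_t]$ is the logarithmic Wulff family. Differentiating with (\ref{varfor}) gives
\[
\int g\,d\mu/\mu(S^{n-1})=c\int g\,d\mathcal{I}_p(K_0,\cdot)
\]
for all even $g$. Since both measures are even, they are proportional. By the homogeneity $\mathcal{I}_p(\lambda K,\cdot)=\lambda^{n(n-p)/p}\mathcal{I}_p(K,\cdot)$ (visible from (\ref{np2})), a dilate of $K_0$ solves $\mu=\mathcal{I}_p(K,\cdot)$. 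This step is routine.

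\textbf{Step 2: existence of an extremizer.} Take a maximizing (or minimizing) sequence $K_i$ normalized by scale. By Blaschke selection, $K_i\to K_0$ in the Hausdorff metric, possibly after unbounded rescaling. The key is to rule out degeneration. Choose John-ellipsoid axes so that $K_i$ is comparable to a box with semi-axes $a_{1,i}\le\dots\le a_{n,i}$. For the entropy term we invoke Lemma \ref{L7.3} from \cite{BLYZZ19}: under the strict subspace concentration inequality (\ref{1.4}), $\frac{1}{|\mu|}\int\log h_{K_i}\,d\mu\ge \sum_j \lambda_j\log a_{j,i}-C$ with weights $\lambda_j$ satisfying $\lambda_1+\dots+\lambda_k<k/n$ strictly. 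For the volume term we use Lemma \ref{T2}, which compares $\rho_{\mathrm{I}_pK}$ with $\rho_{\mathrm{I}K}$ (comparable up to constants depending on $n,p$ for symmetric convex $K$). This reduces $V(\mathrm{I}_pK_i)$ to $V(\mathrm{I}K_i)$, and the latter is affine-covariant: $\mathrm{I}(AK)=|\det A|A^{-t}\mathrm{I}K$, so $V(\mathrm{I}K)\approx V(K)^{n-1}$ up to constants, and similarly $V(\mathrm{I}_pK)\approx V(K)^{(n-p)/p}$ for symmetric convex $K$. Hence the volume term behaves like $\frac1n\sum_j\log a_{j,i}$. Combining, $\Phi(h_{K_i})\ge \sum_j(\lambda_j-\tfrac1n)\log a_{j,i}-C$. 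Under the normalization $\prod_j a_{j,i}\approx 1$, the strict inequalities force this quantity to $+\infty$ whenever the eccentricity blows up (by an Abel summation argument). This contradicts extremality, since a barrier body such as the ball gives a finite value. So the $a_{j,i}$ stay bounded above and below, and $K_0$ has nonempty interior. Continuity of $K\mapsto V(\mathrm{I}_pK)$ in the Hausdorff metric on $\mathcal{K}_o^n$ (dominated convergence in (\ref{DIP}), using the integrability of $|x\cdot y|^{-p}$ for $p<1$) then passes to the limit.

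\textbf{Main obstacle.} The hard step is the uniform two-sided comparison in Step 2 between $V(\mathrm{I}_pK)$ and a power of $V(K)$ (equivalently, of $\prod a_j$) for highly eccentric symmetric bodies, with constants independent of eccentricity. This is especially delicate for $p<0$, where $\rho_{\mathrm{I}_pK}$ is an $L_{|p|}$-moment rather than a section-type quantity. My plan is to use the affine covariance $\mathrm{I}_p(AK)=|\det A|^{1/p}A^{-t}\mathrm{I}_pK$, which follows directly from (\ref{DIP}), to reduce everything to $K$ in John position. There $K$ is comparable to the ball and the bounds are trivial, so the eccentricity enters $V(\mathrm{I}_pK)$ only through $\det A$. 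The exact exponent bookkeeping between this term and the entropy term is where errors are most likely, and I would check it against the limit $p\to 0$ (the cone-volume / log-Minkowski case, where the condition (\ref{1.4}) is known to be the right one).
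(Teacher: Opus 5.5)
Your proposal is correct and has the same architecture as the paper's proof. With the coefficient $\frac{p}{n(n-p)}$, your $\Phi$ is exactly $-\Phi_{\mu,p}$. The reduction to support functions, the Euler--Lagrange step via Theorem~\ref{dtrhoKt} plus rescaling by the $\frac{n(n-p)}{p}$-homogeneity, and the use of John ellipsoids with a subspace-concentration entropy estimate all match the paper.

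One correction: for every $p$ the problem is a minimization of your $\Phi$. The sign of $p$ is already carried by the coefficient, and the replacement $f\mapsto h_{[f]}$ only helps in that direction. So the hedge ``depending on $\operatorname{sgn}(p)$'' should be dropped.

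The genuine difference is in how you estimate the two terms.

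\textbf{Volume term.} The paper proves Lemma~\ref{T2}, a pointwise comparison $\rho_{\mathrm{I}_pK}\asymp V(K)^{(1-p)/p}\rho_{\mathrm{I}K}$, using the Milman--Pajor monotonicity lemmas and Brunn's theorem. It then applies the formula $\mathrm{I}Q=\frac{\omega_{n-1}V(Q)}{\omega_n}Q^*$ for ellipsoids. Your plan in the last paragraph works differently and is valid:
\begin{itemize}
\item $\mathrm{I}_p(AK)=|\det A|^{1/p}A^{-t}\mathrm{I}_pK$, the $\mathrm{GL}(n)$ version of Lemma~\ref{LL3.1}, gives $V(\mathrm{I}_p(AK))=|\det A|^{(n-p)/p}V(\mathrm{I}_pK)$;
\item monotonicity of $K\mapsto\rho^p_{\mathrm{I}_pK}$ for bodies with $B^n\subset K'\subset\sqrt n B^n$ controls the remaining factor;
\item together these yield $\frac{p}{n(n-p)}\log V(\mathrm{I}_pK_i)=\frac1n\log(a_{1,i}\cdots a_{n,i})+O(1)$, with two-sided constants depending only on $n,p$.
\end{itemize}
There is no extra difficulty for $p<0$, so the delicacy you anticipate does not arise. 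Your route is shorter than the paper's, which proves a stronger pointwise statement than the existence argument needs.

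\textbf{Entropy term.} The paper normalizes the circumradius to $1$ and uses Lemma~\ref{L7.3} in the form $E_\mu(Q_j)\le-\frac1n\log(a_{1j}\cdots a_{n-1,j})+t_0\log a_{1j}+c$. You instead use the weighted form and Abel summation to get $\Phi(h_{K_i})\ge t_0\log(a_{n,i}/a_{1,i})-C$. The two are equivalent. However, your weights $\lambda_{j,i}$ depend on $i$. You need the margin $\sum_{j\le k}\lambda_{j,i}\le\frac kn-t_0$ uniformly in $i$, not just strictness for each $i$. This comes from convergence of the John bases together with a small enough cutoff $\delta$.

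You also state explicitly the Hausdorff continuity of $K\mapsto V(\mathrm{I}_pK)$ on $\mathcal{K}_o^n$. This is needed to conclude that the limit body is an extremizer, and the paper leaves it implicit.
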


 In \cite{BLYZ15},  B\"or\"oczky-Lutwak-Yang-Zhang gave the necessary
and sufficient conditions for the existence of a solution to the even logarithmic
Minkowski problem. Specially, a non-zero finite even Borel measure on the unit sphere $S^{n-1}$ is the cone-volume measure of an origin-symmetric convex body in $\mathbb{R}^n$ if and only if it satisfies the subspace concentration condition. The studies of subspace concentration inequality of the cone-volume measure for origin-symmetric convex polytopes dates back to He-Leng-Li \cite{HLL06}, Henk-Sch\"urmann-Wills \cite{HSW05} and Xiong \cite{Xiong10}. Later, Henk-Linke \cite{HL14} proved that the cone-volume measure of a polytope with centroid at the origin satisfies the subspace concentration inequality. On the generalized $p$-th dual curvature measure, B\"or\"oczky-Henk-Pollehn \cite{BHP18} proved a tight subspace concentration inequality for the dual curvature measures of a symmetric convex body. Along the same ideas as that in \cite{BHP18}, we give a necessary condition for the existence of a solution to the $p$-affine dual curvature measure for $0<p<1$.
\begin{theorem}\label{T1.2}
Let $0<p<1$ and $K\subset\mathbb{R}^n$ be an origin-symmetric convex body. Then for every subspace $\xi$ of $\mathbb{R}^n$ with $0<\operatorname{dim} \xi<n$, the $p$-affine dual curvature measure $\mathcal{I}_p(K,\cdot)$ satisfies that
\begin{eqnarray}\label{1.7}
\frac{\mathcal{I}_p\left(K,\xi \cap S^{n-1}\right)}{\mathcal{I}_p\left(K,S^{n-1}\right)}<\frac{\operatorname{dim} \xi}{n-p}.
\end{eqnarray}
\end{theorem}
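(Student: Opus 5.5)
We follow the approach of Böröczky--Henk--Pollehn \cite{BHP18} for dual curvature measures, writing $\mathcal{I}_p(K,\cdot)$ as an integral over $K$ via (\ref{np2}). For an origin-symmetric convex body $K$ and a subspace $\xi$ with $\dim\xi=k$, the set of $x\in K$ with $x/|x|\in{\boldsymbol\alpha}_K^{\ast}(\xi\cap S^{n-1})$ consists of the boundary points whose outer normals lie in $\xi$, together with the segments joining them to $o$. Write $g(x)=\rho^p_{\mathrm{I}^2_pK}(x)$. Since $\rho_{\mathrm{I}^2_pK}$ is a radial function, it is homogeneous of degree $-1$, so $g$ is homogeneous of degree $-p$. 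By (\ref{np2}),
\[
\mathcal{I}_p(K,\eta)=\frac{(n-p)^2}{|p|}\int_{C_\eta} g(x)\,dx ,
\]
where $C_\eta$ is the cone over ${\boldsymbol\alpha}_K^{\ast}(\eta)$ truncated by $K$. In particular $\mathcal{I}_p(K,S^{n-1})=\frac{(n-p)^2}{|p|}\int_K g\,dx$. The goal is therefore
\[
\int_{C_\xi} g\,dx<\frac{k}{n-p}\int_K g\,dx .
\]

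\textbf{Step 1: describe $C_\xi$ geometrically.} Let $P$ be the orthogonal projection onto $\xi$. A boundary point $x$ with an outer normal $u\in\xi$ satisfies $h_K(u)=x\cdot u=Px\cdot u$. Hence $Px\in\partial(PK)$, and $Px$ lies on the face of $PK$ with normal $u$. So $C_\xi\subseteq\{x\in K: Px\in\partial(PK)\}$ up to null sets.

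Rescale the cone: for $t\in[0,1]$, $tC_\xi$ consists of points $y$ with $Py\in t\,\partial(PK)$. A coarea argument along rays then gives
\[
\int_{C_\xi} g\,dx=\frac{1}{n-p}\int_{\partial K\cap\{\text{normals in }\xi\}} g(x)\,(x\cdot\nu_K(x))\,d\mathcal{H}^{n-1}(x).
\]
This uses the homogeneity of $g$, which makes radial integration produce the factor $1/(n-p)$.

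\textbf{Step 2: decompose $K$ by fibers over $PK$.} Write $\int_K g\,dx=\int_{PK}\int_{K\cap(z+\xi^\perp)} g\,dy\,dz$. Then integrate over $PK$ in polar coordinates, radially in $\xi$ from $o$ to $\partial(PK)$.

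Because $K$ is convex and origin-symmetric, for $0\le s\le1$ and $z\in\partial(PK)$ we have
\[
K\cap(sz+\xi^\perp)\supseteq s\,\bigl(K\cap(z+\xi^\perp)\bigr)+(1-s)\bigl(K\cap\xi^\perp\bigr),
\]
and in particular it contains $s\,(K\cap(z+\xi^\perp))$. Using homogeneity of degree $-p$ of $g$, the fiber integral at $sz$ is at least $s^{\,n-k-p}$ times the fiber integral at $z$. The dimension count is $s^{n-k}$ from the fiber volume and $s^{-p}$ from $g$.

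Radial integration in the $k$-dimensional space $\xi$ contributes an extra factor $s^{k-1}$. Integrating $s^{n-p-1}$ over $s\in[0,1]$ gives $\int_0^1 s^{n-p-1}\,ds=1/(n-p)$. Comparing with the radial integral $\int_0^1 s^{k-1}\,ds=1/k$, which corresponds to the contribution of $C_\xi$, one obtains $\int_K g\ge\frac{n-p}{k}\int_{C_\xi}g$. This is exactly the mechanism of \cite{BHP18}, where $g=|x|^{q-n}$.

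The key requirement is that $g$ be positive, which it is, and homogeneous of degree $-p$. For this monotonicity argument one needs an inequality of the form $g(sy)\ge s^{-p}g(y)$, and here it is an equality. No concavity of $g$ is needed, because the fiber containment only uses scaling of sets.

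\textbf{Step 3: strictness (main obstacle).} Equality would force $K\cap(sz+\xi^\perp)=s\,(K\cap(z+\xi^\perp))$ for almost all $s$ and $z$. That means $K\cap\xi^\perp$ contributes nothing, i.e. $K\cap\xi^\perp$ is lower dimensional. This is impossible since $o\in\operatorname{int}K$.

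The delicate part is making Steps 1–2 rigorous. First, one must identify the $\xi$-normal boundary portion with the "top" fibers. Second, $C_\xi$ may be strictly smaller than the full preimage of $\partial(PK)$, which only helps the inequality. Third, the set-containment estimate must be converted into strict inequality of integrals; I would do this by noting that the Minkowski-combination term adds positive measure for $s<1$ wherever $K\cap\xi^\perp$ has nonempty relative interior. Since $g>0$ is continuous on $\mathbb{R}^n\setminus\{o\}$, positive added volume yields a strict gain, giving (\ref{1.7}).
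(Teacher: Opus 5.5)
There is a genuine gap in Step 2, at the point where you say no concavity of $g$ is needed.

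The fiber of $C_\xi$ over $sz$ (for $z\in\partial(K|\xi)$, $0\le s\le 1$) is exactly $s\,(K\cap(z+\xi^\perp))$. So your containment $K\cap(sz+\xi^\perp)\supseteq s\,(K\cap(z+\xi^\perp))$ is just $C_\xi\subseteq K$ read fiber by fiber. Your lower bound is $\int_K g\ge\frac{1}{n-p}\int_{S^{n-1}\cap\xi}\rho_{K|\xi}(u)^k G(u)\,d\mathcal{H}^{k-1}(u)$, with $G(u)=\int_{K\cap(\rho_{K|\xi}(u)u+\xi^\perp)}g\,d\mathcal{L}^{n-k}$. The right-hand side is exactly $\int_{C_\xi}g$: the factor $s^{n-k-p}s^{k-1}$, and hence $1/(n-p)$, belongs to $C_\xi$, as your own Step 1 formula already shows. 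So you have only proved the trivial inequality $\int_K g\ge\int_{C_\xi}g$.

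The factor $1/k$ belongs on the other side. To get the ratio $k/(n-p)$ you need the \emph{unscaled} comparison $\int_{K\cap(sz+\xi^\perp)}g\ge\int_{K\cap(z+\xi^\perp)}g$, with no loss $s^{n-k-p}$. Radial integration then gives $\int_0^1 s^{k-1}ds=1/k$ for $\int_K g$. Even with your bookkeeping, $1/(n-p)$ against $1/k$ would produce the factor $k/(n-p)$, not $(n-p)/k$.

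Positivity and homogeneity of $g$ alone cannot give the unscaled comparison. Take $K=[-1,1]^n$, $\xi=\mathbb{R}e_1$ and $g(x)=|x|^{-p}\phi(x/|x|)$ with $\phi>0$ concentrated near $\pm e_1$. Then almost all of $\int_K g$ lies in $C_\xi=\{x\in K:|x_1|\ge\max_{i\ge 2}|x_i|\}$, so the ratio tends to $1>1/(n-p)$.

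The missing idea is the one the paper uses:
\begin{itemize}
\item By origin symmetry, $K\cap\xi^\perp\supseteq\frac{1}{2}A+\frac{1}{2}(-A)$ for $A=K\cap(z+\xi^\perp)$. Hence $K\cap(sz+\xi^\perp)\supseteq\frac{1+s}{2}A+\frac{1-s}{2}(-A)$.
\item Lemma \ref{L6.2} (from \cite{BHP18}) says that for an even unimodal $f$, the integral over $\lambda A+(1-\lambda)(-A)$ dominates the integral over $A$. Applied to $g$, this yields the unscaled comparison.
\item Unimodality of $g=\rho^p_{\mathrm{I}^2_pK}$ holds because its superlevel sets are the dilates $h^{-1/p}\mathrm{I}^2_pK$. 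These are convex by Berck's theorem, so convexity of $\mathrm{I}^2_pK$ is the essential input you discarded.
\end{itemize}

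Your Step 3 inherits the same problem. In the paper, strictness comes from the equality case of Lemma \ref{L6.2}: high superlevel sets of $g$ miss the top fiber $A\subset\partial K$ but meet the fibers over points near the origin.
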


\section{Preliminaries} \label{Sct2}

We develop some notations and, for quick later references, list some basic facts about convex
bodies. Good general references for the theory of convex bodies are provided by the books of Gardner \cite{Gardner}, Gruber \cite{Gruber07} and Schneider \cite{Schneider14}.

 Let $\mathbb{R}^n$ denote $n$-dimensional Euclidean space with canonical inner product $x \cdot y$, for $x, y \in$ $\mathbb{R}^n$;
throughout we assume that $n
 \geq 2.$ Let $o$ denote the origin of $\mathbb{R}^n.$
 Write $|x|=\sqrt{x \cdot x}$ for the norm of $x$. Let $S^{n-1}$ denote the unit sphere in $\mathbb{R}^n$. Let $B^n$ denote the unit ball centered at the origin of $\mathbb{R}^n$. The $n$-dimensional volume of the unit ball $B^n$ is denoted by $\omega_n$. Let $C(S^{n-1})$ denote the set of continuous functions defined on $S^{n-1}$. Let $C_e(S^{n-1})$ denote the set of even and continuous functions defined on $S^{n-1}$. Let $C^+(S^{n-1})$ denote the set of positive and   continuous functions defined on $S^{n-1}$.  Let $C_e^+(S^{n-1})$ denote the set of even, positive and continuous functions defined on $S^{n-1}$. The group of special linear transformations of $\mathbb{R}^n$ is denoted by $\operatorname{SL}(n)$. The $m$-dimensional Hausdroff measure is denoted by $\mathcal{H}^m$ and the $m$-dimensional Lebesgue measure is denoted by $\mathcal{L}^m$ . When considering an $m$-dimensional convex body $L$ that lies in an $m$-dimensional affine subspace of $\mathbb{R}^n$, we will write $V_m(L)$ rather than $\mathcal{L}^m(L)$. For $K\subset\mathbb{R}^n$, let $V(K)$ denote $V_n(K)$ for abbreviation. Let $K|\xi$ denote the orthogonal projection of $K\subset\mathbb{R}^n$ onto the affine subspace $\xi$ of $\mathbb{R}^n$. Let $\xi^{\perp}$ denote the orthogonal complementary space of the linear subspace $\xi$ of $\mathbb{R}^n$. Let $u^{\perp}$ denote the orthogonal complementary space of the unit vector $u\in S^{n-1}$. For $K\subset\mathbb{R}^n$, let $\operatorname{conv}K$ denote the convex hull of $K$. For a finite measure $\mu$ on $S^{n-1}$, we shall write $|\mu|$ for its total mass, that is $|\mu|=\mu\left(S^{n-1}\right)$. The sign function $\operatorname{sgn}:\mathbb{R}\to\mathbb{R}$ is defined as $\operatorname{sgn}(x)=x/|x|$ if $x\neq 0$ and $\operatorname{sgn}(0)=0.$

\subsection{Convex bodies and star bodies.}

The set of {\it convex bodies} (compact convex subsets with nonempty interiors) of $\mathbb{R}^n$ is denoted by $\mathcal{K}^n$, the set of convex bodies containing the origin in their interiors is denoted by $\mathcal{K}_o^n$, and $\mathcal{K}_e^n \subset \mathcal{K}_o^n$ denotes the set of origin-symmetric convex bodies.

For two sets $K,L\subset\mathbb{R}^n$, the {\em Minkowski sum} $K+L$ is defined by
$$K+L=\{x+y: x\in K, y\in L\}.$$
The scalar multiplication $\lambda K$ of $K$ is defined as
$$\lambda K=\{\lambda x:x\in K\},\;\;\lambda>0.$$
The polar body $K^*$ of $K\subset \mathcal{K}_o^n$ is defined as
$$K^*=\{y\in \mathbb{R}^n:x\cdot y\leq 1,\forall x\in K\}.$$

The {\it support function}, $h_K: S^{n-1} \rightarrow \mathbb{R}$, of a nonempty compact convex set $K \subset \mathbb{R}^n$, is the continuous function on the unit sphere $S^{n-1}$, defined by
\begin{equation}\label{hKv}
h_K(v)=\max \{x \cdot v: x \in K\} .
\end{equation}
A convex body is uniquely determined by its support function. The collection of convex bodies can be viewed as a metric space with the {\it Hausdroff metric}, where the Hausdorff distance between $K, L$ is defined by
\begin{eqnarray}
d_H(K,L):=\min\{t\geq 0:\;K\subset L+tB^n,\;L\subset K+tB^n\}.
\end{eqnarray}
For all $v \in S^{n-1}$, and all $\varphi \in \operatorname{SL}(n)$,
\begin{eqnarray}
h_{\varphi K}(v)=\left|\varphi^{\mathrm{t}} v\right| h_K\left(\left\langle\varphi^{\mathrm{t}} v\right\rangle\right)
\end{eqnarray}
where $\langle x\rangle=x /|x|$ for $x \in \mathbb{R}^n \backslash\{o\}$, and where $\varphi^t$ is the transpose of $\varphi$.

The {\it radial function}, $\rho_K: \mathbb{R}^n \backslash\{o\} \rightarrow \mathbb{R}$, of a compact star-shaped (with respect to the origin) set $K \subset \mathbb{R}^n$, is defined by
\begin{equation}\label{rhoK}
\rho_K(x)=\max \{t \geq 0: t x \in K\} .
\end{equation}
It is clear that
\begin{eqnarray}\label{rhoK}
\rho_{\varphi K}(x)=\rho_K(\varphi^{-1} x),\;\;x\in \mathbb{R}^n\backslash\{o\}, \;\; \varphi \in\operatorname{SL}(n).
\end{eqnarray}
The star-shaped set $K$ is said to be a star body, if $\rho_K$ is continuous and positive on the unit sphere $S^{n-1}$. The set of star bodies is denoted by $\mathcal{S}_o^n$, and $\mathcal{S}_e^n \subset \mathcal{S}_o^n$ denotes the set of origin-symmetric star bodies. For $K\in\mathcal{S}_o^n$, we have the following volume formula,
\begin{eqnarray}\label{volumeformula}
V(K)=\frac{1}{n}\int_{S^{n-1}}\rho^n_K(u)du,
\end{eqnarray}
where $du$ denotes the spherical Lebesgue measure.

The {\it radial distance} between $K,L\in\mathcal{S}_o^n$ is defined as
\begin{eqnarray}\label{e10}
d_R(K,L):=|\rho_K-\rho_L|_{\infty}=\max_{u\in S^{n-1}}|\rho_{K}(u)-\rho_L(u)|.
\end{eqnarray}

Let $h\in C^+\left(S^{n-1}\right)$, $f\in C\left(S^{n-1}\right)$ and $\delta>0$. Let $h_t\in C\left(S^{n-1}\right)$ be functions defined for each $t \in(-\delta, \delta)$ and each $v \in S^{n-1}$ by
\begin{eqnarray}
\log h_t(v)=\log h(v)+t f(v).
\end{eqnarray}\label{W2.8}
Denote by $\left[h_t\right]$ the Wulff shape determined by $h_t$,
\begin{eqnarray}\label{W2.9}
\left[h_t\right]=\left\{x \in \mathbb{R}^n: x \cdot v \leqslant h_t(v) \text { for all } v \in S^{n-1}\right\} .
\end{eqnarray}
We shall call $\left[h_t\right]$ a {\it logarithmic family of Wulff shapes formed by $(h, f)$}. On occasion, we shall write $\left[h_t\right]$ as $[h, f]_t$, and if $h$ happens to be the support function of a convex body $K\in\mathcal{K}_o^n$ perhans as $[K, f]_t$, or as $K_t$ for abbreviation.
\subsection{Radial Gauss image and its reverse.}\label{Sec2}
In \cite{HLYZ16}, Huang-Lutwak-Yang-Zhang gave the definitions of radial Gauss image and reverse radial Gauss image. For $K \in \mathcal{K}_o^n$ and $\eta \subset S^{n-1}$, the radial Gauss image $\boldsymbol{\alpha}_K(\eta)$ is the set of outer unit normals of $K$ at the boundary points $\rho_K(u) u$, for some $u \in \eta$; that is,
$$
\boldsymbol{\alpha}_K(\eta)=\bigcup_{u \in \eta}\left\{v\in S^{n-1}:\; \rho_K(u)u\cdot v=h_K(v)\right\} .
$$

The reverse radial Gauss image $\boldsymbol{\alpha}_K^*(\eta)$ is the set of all radial directions $u \in S^{n-1}$, such that an element in $\eta$ is an outer unit normal to $\partial K$ at the point $\rho_K(u) u \in \partial K$; that is,
$$
\boldsymbol{\alpha}_K^*(\eta)=\bigcup_{v \in \eta}\left\{u \in S^{n-1}:\; \rho_K(u) u \cdot v=h_K(v)\right\} .
$$

Denote $\omega_K \subset S^{n-1}$ as the set of $u \in S^{n-1}$ such that $\boldsymbol{\alpha}_K(\{u\})$ contains more than one point; that is, the point $\rho_K(u) u \in \partial K$ has more than one outer unit normal. We now define the radial Gauss map
$$
\alpha_K: S^{n-1} \backslash \omega_K \longrightarrow S^{n-1},
$$
satisfying $\boldsymbol{\alpha}_K(\{u\})=\left\{\alpha_K(u)\right\}$. It is well known that $\mathcal{H}^{n-1}\left(\omega_K\right)=0$ (see \cite{Schneider14}).

Denote $\eta_K \subset S^{n-1}$ as the set of all $v \in S^{n-1}$ for which $\boldsymbol{\alpha}_K^*(\{v\})$ contains more than one element; that is, there is more than one radial direction $u \in S^{n-1}$ such that $v$ is the unit outer normal of $\rho_K(u) u$. Similarly the reverse radial Gauss map is defined by
$$
\alpha_K^*: S^{n-1} \backslash \eta_K \longrightarrow S^{n-1},
$$
by $\boldsymbol{\alpha}_K^*(\{u\})=\left\{\alpha_K^*(u)\right\}$. The set $\eta_K$ is also of $\mathcal{H}^{n-1}$-measure zero (see \cite{Schneider14}).

\subsection{The affine dual curvature measure}\label{S2.3}
Given a spherical Borel measure $\mu ,$ and $\varphi \in \operatorname{SL}(n)$,
then $\varphi \mu ,$ its affine image, is defined (see \cite{BLYZ15}) by
\begin{equation}\label{defaffineimage}
\varphi\mu(\eta)=\mu\left(\left\langle\varphi^{-1}\eta\right\rangle\right),\quad \text{for~ each~ Borel}~ \eta \subset S^{n-1},
\end{equation}
where $\langle x \rangle=x/|x|,$ for $x\in\mathbb{R}^{n}\backslash\{o\} .$

 Suppose $m\in\{1,\ldots,n-1\}$, and suppose $K \in \mathcal{K}_o^n$.  Cai-Leng-Wu-Xi \cite{CLWX25} gave the definition of the {\it affine dual curvature measures} of $K$ by
$$
\widetilde{C}_m^{\mathrm{a}}(K, \eta)=\int_{{\boldsymbol \alpha}_K^*(\eta)} \rho_K(u)^m \int_{u \in \xi \in G(n, m)} V_m(K \cap \xi)^{n-1} d \xi d u,
$$
for each Borel set $\eta \subset S^{n-1}$, where $G(n, m)$ denotes the Grassmannian of $m$-dimensional subspaces of $\mathbb{R}^n$.  For each $\varphi \in \operatorname{SL}(n)$, the affine dual curvature measures of bodies satisfy the  affine contra-variant property:
$$
\widetilde{C}_m^{\mathrm{a}}(\varphi K, \cdot)=\varphi^{-\mathrm{t}} \widetilde{C}_m^{\mathrm{a}}(K, \cdot) .
$$

The {\it intersection body} of a star body $K\in\mathcal{S}_o^n$ is the centered star body $\mathrm{I} K$ defined by
\begin{eqnarray}\label{1.8}
\rho_{\mathrm{I} K}(u)=V_{n-1}\left(K \cap u^{\perp}\right)=\frac{1}{n-1} R\left(\rho_K^{n-1}\right)(u),
\end{eqnarray}
for each $u \in S^{n-1}$. Here $R$ denotes the spherical Radon transform,
for $f \in C\left(S^{n-1}\right)$, defined by
$$
R f(u)=\int_{S^{n-1} \cap u^{\perp}} f(v) d v,\;\;\;u \in S^{n-1}.
$$

The affine measure $\widetilde{C}_m^{\mathrm{a}}(K, \cdot)$ has a geometric interpretation by way of Zhang intersection bodies (see \cite{Zhang96}). In fact, for $m=n-1$, recalling (\ref{1.8}), it can be shown that, for $u \in S^{n-1}$,
$$
\int_{u\in\xi\in G(n,n-1)} V_{n-1}(K \cap \xi)^{n-1} d \xi=\frac{n-1}{n \omega_n} \rho_{\mathrm{I}^2 K}(u),
$$
where $\mathrm{I}^2 K=\mathrm{I}(\mathrm{I} K)$ is the intersection body of $\mathrm{I} K$. Therefore
\begin{eqnarray}\label{1.9}
\widetilde{C}_{n-1}^{\mathrm{a}}(K,\eta)=\frac{n-1}{n \omega_n} \int_{{\boldsymbol \alpha}_K^{\ast}(\eta)} \rho_K(u)^{n-1} \rho_{\mathrm{I}^2 K}(u) d u .
\end{eqnarray}

\section{Variation formula for $p$-affine dual curvature measure}

\begin{lemma}\cite{HLYZ16}\label{Huangcata}
Let $h\in C^+(S ^ { n - 1 })$ and $f \in C(S ^ { n - 1 })$.
Let $K _ { t }$ be the logarithimic family of Wulff shapes formed by $(h,f)$,
and abbreviate $K _ { 0 }$ by $K $. Then for almost all $u \in S ^ { n - 1 } $,
\begin{equation}\label{dKt}
  \lim _ { t \rightarrow 0 } \frac { \rho _ { K _ { t } } ( u )  - \rho _ { K } ( u )  } { t } =  \rho _ { K } ( u )  f \left( \alpha _ { K } ( u ) \right) .
\end{equation}
Moreover, there exist $\delta _ { 0 } > 0$ and $M _ { 0 } > 0$ so that
\begin{equation}\label{dKtbound}
\left| \rho _ { K _ { t } } ( u )  - \rho _ { K } ( u )  \right| \leq M _ { 0 } | t | ,\end{equation}
for all $u \in S ^ { n - 1 }$ and all $t \in \left( - \delta _ { 0 } , \delta _ { 0 } \right) $.

\end{lemma}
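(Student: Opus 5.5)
This lemma is quoted from \cite{HLYZ16}, so we only sketch the argument, which is the standard one. The starting point is the polar correspondence. The Wulff shape $K_t=[h_t]$ has polar body $K_t^*=\operatorname{conv}\{v/h_t(v):v\in S^{n-1}\}$, so that
\[
\rho_{K_t}(u)^{-1}=h_{K_t^*}(u)=\max_{v\in S^{n-1}}\frac{u\cdot v}{h_t(v)}=\max_{v\in S^{n-1}} \frac{u\cdot v}{h(v)}e^{-tf(v)}.
\]
Hence $\log\rho_{K_t}(u)=-\log\max_v \big(u\cdot v\,h(v)^{-1}e^{-tf(v)}\big)$. In this form the derivative can be computed with a Danskin-type envelope argument for a maximum of a smooth family of functions over a compact set.

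The first step is the Lipschitz bound (\ref{dKtbound}). Write $g_t(v)=\frac{u\cdot v}{h(v)}e^{-tf(v)}$. Then $e^{-|t|\|f\|_\infty}g_0\le g_t\le e^{|t|\|f\|_\infty}g_0$ on $\{u\cdot v>0\}$, and only this set matters for the maximum, since $h_{K^*}(u)>0$. Taking maxima gives $|\log\rho_{K_t}(u)-\log\rho_K(u)|\le |t|\,\|f\|_\infty$ uniformly in $u$. Since $h$ is positive and continuous, $\rho_K$ is bounded above and below by positive constants, and this yields (\ref{dKtbound}) with $M_0$ depending on $\|f\|_\infty$ and $\max\rho_K$, for $|t|<\delta_0=1$, say.

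The second step is the derivative. Fix $u\notin\omega_K$; this excludes only a null set. The maximizers of $g_0$ are exactly the $v$ with $\rho_K(u)u\cdot v=h_K(v)$. This set equals $\boldsymbol{\alpha}_K(\{u\})=\{\alpha_K(u)\}$, the single outer normal at $\rho_K(u)u$. A small point needs care here: $h$ is only a continuous function and need not be the support function $h_K$ of its own Wulff shape $K=[h]$. The maximum over $v$ of $u\cdot v/h(v)$ equals the maximum of $u\cdot v/h_K(v)$ because $h_K\le h$ and the two agree at normals of $K$ (Aleksandrov's lemma). So the maximizer is a normal $v_0=\alpha_K(u)$ with $h(v_0)=h_K(v_0)$. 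We then use the one-sided envelope bounds. Evaluating at $v_0$ gives $\max_v g_t\ge g_t(v_0)=g_0(v_0)e^{-tf(v_0)}$. For the other direction, let $v_t$ be a maximizer of $g_t$. Then $\max_v g_t\le g_0(v_t)e^{-tf(v_t)}\le \max g_0\cdot e^{-tf(v_t)}$. By compactness and uniqueness of the maximizer of $g_0$, we have $v_t\to v_0$, and continuity of $f$ gives $f(v_t)\to f(v_0)$. Combining both bounds shows that $\log\max_v g_t=\log\max_v g_0-tf(v_0)+o(t)$. Therefore $\log\rho_{K_t}(u)=\log\rho_K(u)+tf(\alpha_K(u))+o(t)$, and exponentiating gives (\ref{dKt}).

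The main subtlety is the second step: identifying the unique maximizer with $\alpha_K(u)$ when $h\ne h_K$, and proving convergence of the maximizers $v_t$. Both follow from compactness, and the exceptional set $\omega_K$ has $\mathcal{H}^{n-1}$-measure zero, as recalled in Section \ref{Sec2}.
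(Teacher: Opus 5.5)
Your argument is correct and is essentially the proof in \cite{HLYZ16}, which the paper cites without proof: polarity gives $\rho_{K_t}(u)^{-1}=\max_{v}\frac{u\cdot v}{h(v)}e^{-tf(v)}$, the derivative follows from the two-sided envelope bounds using the fixed maximizer $\alpha_K(u)$ and maximizers $v_t\to\alpha_K(u)$, and the uniform bound is $|\log\rho_{K_t}-\log\rho_K|\le |t|\,\|f\|_\infty$ (there the argument is done first for support functions of convex hulls and then transferred to Wulff shapes by polarity). One small correction: $h$ and $h_K$ need not agree at every normal of $K$, but you do not need this, since any maximizer $v$ of $g_0$ satisfies $\rho_K(u)\,u\cdot v\le h_K(v)\le h(v)=\rho_K(u)\,u\cdot v$ and hence lies in $\boldsymbol{\alpha}_K(\{u\})=\{\alpha_K(u)\}$.
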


\begin{theorem}\label{dtrhoKt}
Let $p\in(-\infty,0)\cup(0,1)$, $K \in \mathcal{K}_o^n$, $f\in C\left(S^{n-1}\right)$ and let $K _ { t }$ be the logarithimic family of Wulff shapes generated by $h_K$ and $f$.
Then,
\begin{eqnarray}\label{varfor}
\lim_{t\rightarrow 0}\frac{V\left(\mathrm{I}_pK_t\right)-V\left(\mathrm{I}_pK\right)}{t}
			=\operatorname{sgn}(p)\int_{S^{n-1}}
f(u)d\mathcal{I}_p(K,u).
\end{eqnarray}
\end{theorem}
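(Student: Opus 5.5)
We write $V(\mathrm{I}_pK_t)$ as a double integral and differentiate under the integral sign using Lemma \ref{Huangcata}. By (\ref{volumeformula}) and (\ref{DIP}),
$$V(\mathrm{I}_pK_t)=\frac1n\int_{S^{n-1}}\rho_{\mathrm{I}_pK_t}^n(u)\,du=\frac1n\int_{S^{n-1}}\Bigl(\frac{1-p}{2}\int_{K_t}|u\cdot y|^{-p}dy\Bigr)^{n/p}du .$$
Switching the inner integral to polar coordinates gives
$$\int_{K_t}|u\cdot y|^{-p}dy=\frac{1}{n-p}\int_{S^{n-1}}\rho_{K_t}^{n-p}(v)|u\cdot v|^{-p}dv=\frac{1}{n-p}T_{-p}\rho_{K_t}^{n-p}(u).$$
Here $n-p>0$ in all cases, and $|u\cdot v|^{-p}$ is integrable on $S^{n-1}$ since $-p>-1$. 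So, up to constants, $V(\mathrm{I}_pK_t)=c\int_{S^{n-1}}\bigl(T_{-p}\rho_{K_t}^{n-p}(u)\bigr)^{n/p}du$.

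\textbf{Step 1: the inner derivative.} For almost every $v$, Lemma \ref{Huangcata} gives
$$\frac{d}{dt}\Big|_{t=0}\rho_{K_t}^{n-p}(v)=(n-p)\rho_K^{n-p}(v)f(\alpha_K(v)).$$
The bound (\ref{dKtbound}), together with $\rho_K$ being bounded above and away from zero, makes the difference quotients uniformly bounded for small $|t|$. Since $|u\cdot v|^{-p}$ is integrable uniformly in $u$, dominated convergence gives, uniformly in $u$,
$$\frac{T_{-p}\rho_{K_t}^{n-p}(u)-T_{-p}\rho_{K}^{n-p}(u)}{t}\to (n-p)\int_{S^{n-1}}\rho_K^{n-p}(v)f(\alpha_K(v))|u\cdot v|^{-p}dv .$$

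\textbf{Step 2: the outer derivative.} The functions $T_{-p}\rho_{K_t}^{n-p}$ are continuous, positive and uniformly bounded above and below for small $|t|$. Hence the chain rule (or the mean value theorem plus dominated convergence) applies to $s\mapsto s^{n/p}$. We obtain
$$\frac{d}{dt}\Big|_{t=0}V(\mathrm{I}_pK_t)=\frac1n\cdot\frac np\int_{S^{n-1}}\rho_{\mathrm{I}_pK}^{n-p}(u)\,\frac{1-p}{2}\,\frac{d}{dt}\Big|_{t=0}\frac{T_{-p}\rho_{K_t}^{n-p}(u)}{n-p}\,du .$$
This uses $\rho_{\mathrm{I}_pK}^{n}=\bigl(\rho_{\mathrm{I}_pK}^{p}\bigr)^{n/p}$, so the factor coming from $s^{n/p-1}$ is $\rho_{\mathrm{I}_pK}^{n-p}$. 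After the $(n-p)$ factors cancel, the result is
$$\frac{1-p}{2p}\int_{S^{n-1}}\int_{S^{n-1}}\rho_{\mathrm{I}_pK}^{n-p}(u)\rho_K^{n-p}(v)f(\alpha_K(v))|u\cdot v|^{-p}\,dv\,du .$$

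\textbf{Step 3: identify the measure.} Apply Fubini, which is legitimate because the integrand is nonnegative apart from the bounded factor $f$ and has finite total integral. The integral becomes
$$\frac{1-p}{2p}\int_{S^{n-1}}f(\alpha_K(v))\rho_K^{n-p}(v)T_{-p}\rho_{\mathrm{I}_pK}^{n-p}(v)\,dv .$$
Write $\frac1p=\operatorname{sgn}(p)\frac1{|p|}$. By Definition \ref{DIPK}, $\mathcal{I}_p(K,\cdot)$ is the pushforward under $\alpha_K$ of the measure $\frac{1-p}{2|p|}\rho_K^{n-p}T_{-p}\rho_{\mathrm{I}_pK}^{n-p}\,dv$; here $\alpha_K^{-1}(\eta)$ agrees with $\boldsymbol\alpha_K^*(\eta)$ up to the null set $\omega_K$. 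The standard change of variables for pushforwards then yields (\ref{varfor}).

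\textbf{Main obstacle.} The delicate step is justifying the interchange of limit and integral when $p<0$ is large, and in the singular kernel when $p\in(0,1)$. For $p\in(0,1)$ the kernel $|u\cdot v|^{-p}$ is unbounded, but integrability, uniform in $u$, together with the Lipschitz bound (\ref{dKtbound}), suffices. For $p<0$ the kernel is bounded, and positivity of $T_{-p}\rho_K^{n-p}$ still holds. So in both cases the argument is dominated convergence with the bounds from Lemma \ref{Huangcata}.
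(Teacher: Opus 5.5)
Your proposal is correct and follows essentially the same route as the paper. You write $V(\mathrm{I}_pK_t)=\frac1n\int_{S^{n-1}}\bigl(\frac{1-p}{2(n-p)}T_{-p}\rho_{K_t}^{n-p}(u)\bigr)^{n/p}du$ via polar coordinates, differentiate under the integral sign using Lemma \ref{Huangcata} and dominated convergence, then apply Fubini and identify $\frac{1-p}{2p}\int_{S^{n-1}} f(\alpha_K(v))\rho_K^{n-p}(v)T_{-p}\rho_{\mathrm{I}_pK}^{n-p}(v)\,dv$ with $\operatorname{sgn}(p)\int f\,d\mathcal{I}_p(K,\cdot)$. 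What you add is the detail the paper leaves implicit: the uniform bounds, the domination, and the agreement of ${\boldsymbol\alpha}_K^{\ast}(\eta)$ with $\alpha_K^{-1}(\eta)$ up to the null set $\omega_K$.
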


\begin{proof}
First, by (\ref{volumeformula}), (\ref{DIP}) and the polar coordinate transformation,
\begin{eqnarray}\label{IpK}
V(\mathrm{I}_pK)&=&\frac{1}{n}\int_{S^{n-1}}\rho_{\mathrm{I}_pK}(u)^ndu\nonumber\\
&=&\frac{1}{n}\int_{S^{n-1}}\left(\frac{1-p}{2}\int_K\left|x\cdot u\right|^{-p}dx\right)^{\frac{n}{p}}du\nonumber\\
&=&\frac{1}{n}\int_{S^{n-1}}\left(\frac{1-p}{2}\int_{S^{n-1}}\int_{0}^{\rho_K(v)}\left|v\cdot u\right|^{-p}r^{n-p-1}drdv\right)^{\frac{n}{p}}du\nonumber\\
&=&\frac{1}{n}\int_{S^{n-1}}\left(\frac{1-p}{2(n-p)}\int_{S^{n-1}}\left|v\cdot u\right|^{-p}\rho_K(v)^{n-p}dv\right)^{\frac{n}{p}}du.
\end{eqnarray}
Thus,  by (\ref{IpK}), (\ref{dKtbound}), Lebesgue dominated convergence theorem, (\ref{dKt}), Fubini's theorem, (\ref{1.2}) and (\ref{IPK}),
\begin{eqnarray}
&&\lim_{t\rightarrow 0}\frac{V\left(\mathrm{I}_pK_t\right)-V\left(\mathrm{I}_pK\right)}{t}\nonumber\\
&=&\frac{1-p}{2p}\int_{S^{n-1}}\int_{S^{n-1}}\rho^{n-p}_{\mathrm{I}_pK}(u)|v\cdot u|^{-p}\rho^{n-p}_{K}(v)f(\alpha_{K}(v))dvdu\nonumber\\
&=&\frac{1-p}{2p}\int_{S^{n-1}}\rho^{n-p}_{K}(v)f(\alpha_{K}(v))\int_{S^{n-1}}\rho^{n-p}_{\mathrm{I}_pK}(u)|v\cdot u|^{-p}dudv\nonumber\\
&=&\frac{1-p}{2p}\int_{S^{n-1}}\rho^{n-p}_{K}(v)f(\alpha_{K}(v))T_{-p}\rho_{\mathrm{I}_pK}^{n-p}(v)dv\nonumber\\
&=&\operatorname{sgn}(p)\int_{S^{n-1}}
f(u)d\mathcal{I}_p(K,u).\nonumber
\end{eqnarray}\qed
\end{proof}

\section{Affine contra-invariance of $\mathcal{I}_p(K, \cdot)$}

Suppose $\{ \mu ( K , \cdot ) \} _ { K \in \mathcal { K } ^ { n } }$ is a family of spherical Borel measures that are parametrized by
the set of convex bodies $K \in \mathcal { K } ^ { n } .$ We say $\mu$ is an {\it affine covariant family}, if for each $K \in \mathcal { K } ^ { n }$
and each $\varphi \in \operatorname{SL}(n) ,$
$$\mu ( \varphi K , \cdot ) = \varphi \mu ( K , \cdot ) ;$$
and say $\mu$ is {\it affine contra-variant family}, if for each $K \in \mathcal { K } ^ { n }$ and each $\varphi \in \operatorname{SL}(n) ,$
$$\mu ( \varphi K , \cdot ) = \varphi ^ { - \mathrm { t } } \mu ( K , \cdot ) ,$$
where $\varphi ^ { - \mathrm { t } }$ is the inverse of the transpose of $\varphi .$


\begin{lemma}\label{LL3.1}
Let $K\in\mathcal{K}_o^n$ and $\varphi\in \operatorname{SL}(n)$. Then
\begin{eqnarray}\label{IPvar}
\mathrm{I}_p(\varphi K)=\varphi^{-t} \mathrm{I}_pK.
\end{eqnarray}
\end{lemma}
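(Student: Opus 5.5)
The plan is to compare radial functions directly, using definition (\ref{DIP}) together with the transformation rule (\ref{rhoK}) for radial functions under linear maps. Two star bodies coincide exactly when their radial functions agree on $\mathbb{R}^n\setminus\{o\}$. So it suffices to show, for every $x\neq o$,
\begin{eqnarray*}
\rho\left(\mathrm{I}_p(\varphi K),x\right)=\rho\left(\varphi^{-\mathrm{t}}\mathrm{I}_pK,x\right)=\rho\left(\mathrm{I}_pK,\varphi^{\mathrm{t}}x\right).
\end{eqnarray*}
The second equality is (\ref{rhoK}) applied with $\varphi^{-\mathrm{t}}\in\operatorname{SL}(n)$, whose inverse is $\varphi^{\mathrm{t}}$. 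Note that (\ref{DIP}) defines $\rho_{\mathrm{I}_pK}$ on all of $\mathbb{R}^n\setminus\{o\}$, not only on the sphere, and it is visibly homogeneous of degree $-1$. This means the identity can be checked pointwise for arbitrary nonzero $x$, with no normalisation to unit vectors needed.

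The computation is a single change of variables. Starting from
\begin{eqnarray*}
\rho\left(\mathrm{I}_p(\varphi K),x\right)^p=\frac{1-p}{2}\int_{\varphi K}|x\cdot y|^{-p}\,dy,
\end{eqnarray*}
substitute $y=\varphi z$ with $z\in K$. Since $\det\varphi=1$, we have $dy=dz$, and $x\cdot\varphi z=\varphi^{\mathrm{t}}x\cdot z$. The integral therefore becomes
\begin{eqnarray*}
\frac{1-p}{2}\int_K|\varphi^{\mathrm{t}}x\cdot z|^{-p}\,dz=\rho\left(\mathrm{I}_pK,\varphi^{\mathrm{t}}x\right)^p .
\end{eqnarray*}
Because $\varphi^{\mathrm{t}}x\neq o$ whenever $x\neq o$, the right-hand side is well defined. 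Taking $p$-th roots, which is legitimate for either sign of $p$ since both sides are positive, gives the desired identity.

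The only point needing care, and the nearest thing to an obstacle, is that the integrals are finite and positive, so that the radial function is a genuine positive finite value. For $p<0$ the integrand $|x\cdot y|^{-p}$ is bounded on the compact set $K$. For $0<p<1$ the singularity $|x\cdot y|^{-p}$ along the hyperplane $x^{\perp}$ is integrable, because $p<1$. Positivity follows since $K$ has nonempty interior. Both properties are preserved under the volume-preserving substitution.

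Finally, the symmetry assumption in (\ref{DIP}) is not used; the argument applies verbatim to any $K\in\mathcal{K}_o^n$. As a consequence, $V(\mathrm{I}_p(\varphi K))=V(\mathrm{I}_pK)$ for $\varphi\in\operatorname{SL}(n)$, since $\det\varphi^{-\mathrm{t}}=1$.
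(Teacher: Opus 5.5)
Your proof is correct and is the paper's argument: the same substitution $y=\varphi z$ (using $\det\varphi=1$ and $x\cdot\varphi z=\varphi^{\mathrm{t}}x\cdot z$), combined with $\rho_{\varphi^{-\mathrm{t}}L}(x)=\rho_L(\varphi^{\mathrm{t}}x)$. The differences are cosmetic. The paper restricts to unit $u$ and pulls the factor $|\varphi^{\mathrm{t}}u|^{-p}$ out of both sides via $v=\varphi^{\mathrm{t}}u/|\varphi^{\mathrm{t}}u|$, whereas you use the $(-1)$-homogeneity of (\ref{DIP}) to work with arbitrary $x\neq o$ and also check that the integrals are finite and positive.
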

\begin{proof}
By the definition of $L_p$ intersection body (\ref{DIP}), let $u\in S^{n-1}$ and $v=\varphi^tu/|\varphi^tu|$,
\begin{eqnarray}\label{rho1}
\rho\left(\mathrm{I}_p(\varphi K),u\right)^p=\frac{1-p}{2}\int_{\varphi K}\left|x\cdot u\right|^{-p}dx=\frac{1-p}{2}\left|\varphi^tu\right|^{-p}\int_K\left|y\cdot v\right|^{-p}dy.
\end{eqnarray}
Moreover, by (\ref{rhoK}) and (\ref{DIP}),
\begin{eqnarray}\label{rho2}
\rho\left(\varphi^{-t}\mathrm{I}_pK,u\right)^p=\rho\left(\mathrm{I}_pK,\varphi^{t}u\right)^p=\frac{1-p}{2}\int_{ K}\left|x\cdot\varphi^{t}u\right|^{-p}dx=\frac{1-p}{2}\left|\varphi^tu\right|^{-p}\int_K\left|x\cdot v\right|^{-p}dx.
\end{eqnarray}
By (\ref{rho1}), (\ref{rho2}) and the arbitrariness of $u\in S^{n-1}$, (\ref{IPvar}) is established. \qed
\end{proof}
\begin{proposition} The $p$-affine dual curvature measure $\mathcal{I}_p(K, \cdot)$ is affine contra-variant family, i.e., for any $\varphi \in \operatorname{SL}(n)$, we have
$$
\mathcal{I}_p(\varphi K, \eta)=\varphi^{-t}\mathcal{I}_p\left(K,\eta\right).
$$
\end{proposition}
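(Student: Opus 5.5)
The plan is to prove the identity through the solid-integral representation (\ref{np2}) of $\mathcal{I}_p$ and a linear change of variables. First I unpack the right-hand side. By the definition (\ref{defaffineimage}) of the affine image of a measure, applied with $\varphi^{-\mathrm t}$ in place of $\varphi$,
$$\varphi^{-\mathrm t}\mathcal{I}_p(K,\eta)=\mathcal{I}_p\bigl(K,\langle\varphi^{\mathrm t}\eta\rangle\bigr).$$
So it suffices to show $\mathcal{I}_p(\varphi K,\eta)=\mathcal{I}_p(K,\langle\varphi^{\mathrm t}\eta\rangle)$ for every Borel set $\eta\subset S^{n-1}$.

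\emph{Step 1: the body $\mathrm{I}_p^2$ is covariant.} Applying Lemma \ref{LL3.1} twice gives
$$\mathrm{I}_p(\varphi K)=\varphi^{-\mathrm t}\mathrm{I}_pK,$$
and then, using $\varphi^{-\mathrm t}\in\operatorname{SL}(n)$ with $(\varphi^{-\mathrm t})^{-\mathrm t}=\varphi$,
$$\mathrm{I}_p^2(\varphi K)=\mathrm{I}_p(\varphi^{-\mathrm t}\mathrm{I}_pK)=\varphi\,\mathrm{I}_p^2K.$$
By (\ref{rhoK}) this yields $\rho_{\mathrm{I}_p^2\varphi K}(x)=\rho_{\mathrm{I}_p^2K}(\varphi^{-1}x)$ for all $x\neq o$. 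Here the radial function is understood on $\mathbb{R}^n\setminus\{o\}$, homogeneous of degree $-1$, which is the sense in which it appears in (\ref{np2}).

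\emph{Step 2: transport of the reverse radial Gauss image.} I claim
$$\{x\in\varphi K:\ x/|x|\in\boldsymbol\alpha^*_{\varphi K}(\eta)\}=\varphi\,\{y\in K:\ y/|y|\in\boldsymbol\alpha^*_K(\langle\varphi^{\mathrm t}\eta\rangle)\}.$$
The reason is that boundary points correspond: $\rho_{\varphi K}(u)u=\varphi y$ with $y\in\partial K$. Moreover, $v$ is an outer unit normal of $\varphi K$ at $\varphi y$ if and only if $\langle\varphi^{\mathrm t}v\rangle$ is an outer unit normal of $K$ at $y$. This follows from
$$\varphi y\cdot v=h_{\varphi K}(v)\iff y\cdot\varphi^{\mathrm t}v=h_K(\varphi^{\mathrm t}v),$$
together with the positive homogeneity of $h_K$. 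Since the cones are unions of segments from $o$ to boundary points, the set identity follows.

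\emph{Step 3: change of variables.} Substitute $x=\varphi y$ in (\ref{np2}) for $\varphi K$; since $\det\varphi=1$, $dx=dy$. Then Steps 1 and 2 give
$$\mathcal{I}_p(\varphi K,\eta)=\frac{(n-p)^2}{|p|}\int_{y\in K,\ y/|y|\in\boldsymbol\alpha^*_K(\langle\varphi^{\mathrm t}\eta\rangle)}\rho^p_{\mathrm{I}_p^2K}(y)\,dy=\mathcal{I}_p\bigl(K,\langle\varphi^{\mathrm t}\eta\rangle\bigr),$$
which is the required identity.

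The main obstacle is Step 2, where the correspondence of normals under $\varphi$ must be handled carefully. That correspondence is set-valued at singular boundary points, so the identity has to be stated for the full sets $\boldsymbol\alpha^*$ rather than for the a.e.-defined maps. One should also confirm that the factor $|\varphi^{\mathrm t}v|$ in $h_{\varphi K}(v)=|\varphi^{\mathrm t}v|\,h_K(\langle\varphi^{\mathrm t}v\rangle)$ cancels, which it does because the support condition is homogeneous in $v$.
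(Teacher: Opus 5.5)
Your argument is correct, and it takes a genuinely different route from the paper.

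\textbf{The paper's route.} The paper never uses the explicit formula for $\mathcal{I}_p$. It uses Lemma \ref{LL3.1} only to get $V(\mathrm{I}_p(\varphi L))=V(\mathrm{I}_pL)$. It then observes that $\varphi[K,f]_t=[\varphi K,f\circ\varphi^{\mathrm t}]_t$, with $f$ extended $0$-homogeneously. Differentiating both sides at $t=0$ via Theorem \ref{dtrhoKt} gives $\int f\,d\mathcal{I}_p(K,\cdot)=\int f(\langle\varphi^{\mathrm t}v\rangle)\,d\mathcal{I}_p(\varphi K,v)$ for all $f\in C(S^{n-1})$. Finally it passes to indicator functions by uniqueness of finite Borel measures.

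\textbf{Your route.} You verify the identity set by set from the solid representation (\ref{np2}). The ingredients are the covariance $\mathrm{I}_p^2(\varphi K)=\varphi\,\mathrm{I}_p^2K$, the transport under $\varphi$ of the full set-valued reverse radial Gauss image, and the unimodular change of variables.

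\textbf{What each buys.} The paper's argument is structural. It shows that any measure arising as the first variation, along logarithmic Wulff shapes, of an $\operatorname{SL}(n)$-invariant functional is automatically contra-variant, with no analysis of how normals transform. The price is dependence on the variational formula (hence on Lemma \ref{Huangcata} and dominated convergence) and on a measure-uniqueness step. Your argument is elementary and self-contained. It needs no variational formula or approximation, and it handles singular boundary points explicitly. It also shows, as a by-product, that the density $\rho^p_{\mathrm{I}_p^2K}$ in (\ref{np2}) transforms as a scalar under $\operatorname{SL}(n)$.

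\textbf{One point to state explicitly.} Your second application of Lemma \ref{LL3.1} is to $\mathrm{I}_pK$. This is a priori only a star body, not a member of $\mathcal{K}_o^n$ as the lemma is stated. Since the lemma's proof is a pure change of variables in the defining integral (\ref{DIP}), it applies verbatim, and no convexity of $\mathrm{I}_pK$ is needed.
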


\begin{proof} For $K \in \mathcal { K } _ { o } ^ { n }$ and $f \in C \left( S ^ { n - 1 } \right) ,$ let $[ K , f ] _ { t }$ be the logarithimic family of Wulff shapes.
By $\varphi \in \operatorname{SL}(n)$, Lemma \ref{LL3.1} and Theorem \ref{dtrhoKt}, we have
 \begin{eqnarray}\label{dtIpphiK}
&&\left. \frac { d } { d t } \right| _ { t = 0 } V\left(\mathrm{I}_p \left(\varphi [ K , f ] _ { t }\right) \right) = \left. \frac { d } { d t } \right| _ { t = 0 } V\left(\mathrm{I}_p [ K , f ] _ { t } \right)=\operatorname{sgn}(p)\int_{S^{n-1}}
f(u)d\mathcal{I}_p(K,u).
 \end{eqnarray}
Extend $f$ to be a 0-homogeneous function on $\mathbb { R } ^ { n } \backslash \{ o \} .$ Using (\ref{W2.8}) and (\ref{W2.9}), we can rewrite the set
 $\varphi [ K , f ] _ { t }$ as follows,
\begin{align*}
    \varphi [ K , f ] _ { t } &= \left\{ \varphi x : \log ( x \cdot v ) _ { + } \leq \log h _ { K } ( v ) + t f ( v ) , \quad \forall v \in S ^ { n - 1 } \right\} \\
   & = \left\{ y : \log ( y \cdot w ) _ { + } \leq \log h _ { \varphi K } ( w ) + t f \left( \varphi ^ { \mathrm { t } } w \right) , \quad \forall w \in S ^ { n - 1 } \right\} \\
   & = \left[ \varphi K , f \circ \varphi ^ { \mathrm { t } } \right] _ { t } .
\end{align*}
It follows from Theorem \ref{dtrhoKt} that
\begin{eqnarray}
\left. \frac { d } { d t } \right| _ { t = 0 } V\left(\mathrm{I}_p\left( \varphi [ K , f ] _ { t }\right)\right) = \left. \frac { d } { d t } \right| _ { t = 0 } V\left(\mathrm{I}_p \left[ \varphi K , f \circ \varphi ^ { \mathrm { t } } \right] _ { t } \right)=\operatorname{sgn}(p)\int_{S^{n-1}}
f\left( \left\langle \varphi ^ { \mathrm { t } } v \right\rangle \right) d\mathcal{I}_p(\varphi K,u).
\end{eqnarray}
Since $f$ is an arbitrary continuous function on $S ^ { n - 1 } ,$ and we are dealing with finite Borel
measures on $S ^ { n - 1 } ,$ the integrals of $f$ can be replaced by integrals of the characteristic function
of Borel sets. Thus, the above equation together with (\ref{dtIpphiK}) imply that
$$\int _ { S ^ { n - 1 } } 1 _ { \eta } ( v ) d \mathcal { I } _ { p }  ( K , v ) = \int _ { S ^ { n - 1 } } 1 _ { \eta } \left( \left\langle \varphi ^ { \mathrm { t } } v \right\rangle \right) d \mathcal { I } _ { p } ( \varphi K , v ) ,$$
for each Borel subset $\eta \subset S ^ { n - 1 } .$ Since for $v \in S ^ { n - 1 } ,$ we have $\left\langle \varphi ^ { \mathrm { t } } v \right\rangle \in \eta$ if and only if $v \in \left\langle \varphi ^ { - \mathrm { t } } \eta \right\rangle ,$
it follows that
$$\mathcal{I}_p ( K , \eta ) = \mathcal{I}_p \left( \varphi K , \left\langle \varphi ^ { - \mathrm { t } } \eta \right\rangle \right) = \varphi ^ { \mathrm { t } } \mathcal{I}_p ( \varphi K , \eta ) ,$$
where the right-hand equality follows from definition (\ref{defaffineimage}). This immediately yields the desired
fact that $\mathcal{I}_p ( \varphi K , \cdot ) = \varphi ^ { - \mathrm { t } } \mathcal{I}_p ( K , \cdot ) .$\qed
\end{proof}

\section{Some special cases of the $p$-affine dual curvature measures.}

\begin{proposition} Let $K \in \mathcal{K}_o^n$. Then the measures
$\mathcal{I}_p(K,\cdot)$ weakly converges to the measure $n\omega_n\widetilde{C}_{n-1}^{\mathrm{a}}(K,\cdot)$ given in (\ref{1.9}) as $p\rightarrow 1^{-}$.
\end{proposition}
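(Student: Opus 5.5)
The plan is to work with the integral representation of $\mathcal{I}_p(K,\cdot)$ against test functions and show that its density with respect to spherical Lebesgue measure converges uniformly. For $f\in C(S^{n-1})$, Definition \ref{DIPK} and the fact that $\alpha_K$ is defined off the $\mathcal{H}^{n-1}$-null set $\omega_K$ give
\begin{eqnarray*}
\int_{S^{n-1}}f\,d\mathcal{I}_p(K,\cdot)=\frac{1}{p}\int_{S^{n-1}}f(\alpha_K(v))\,\rho_K^{n-p}(v)\,\frac{1-p}{2}T_{-p}\rho^{n-p}_{\mathrm{I}_pK}(v)\,dv
\end{eqnarray*}
for $p\in(0,1)$. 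By (\ref{1.9}), the target is
\begin{eqnarray*}
\int_{S^{n-1}} f\,d\big(n\omega_n\widetilde{C}^{\mathrm a}_{n-1}(K,\cdot)\big)=(n-1)\int_{S^{n-1}}f(\alpha_K(v))\,\rho_K^{n-1}(v)\,\rho_{\mathrm{I}^2K}(v)\,dv .
\end{eqnarray*}
Since $1/p\to1$, $\rho_K^{n-p}\to\rho_K^{n-1}$ uniformly, and $f\circ\alpha_K$ is bounded, it suffices to prove
\begin{eqnarray*}
\frac{1-p}{2}T_{-p}\rho^{n-p}_{\mathrm{I}_pK}\longrightarrow R\big(\rho_{\mathrm{I}K}^{n-1}\big)=(n-1)\rho_{\mathrm{I}^2K}
\end{eqnarray*}
uniformly on $S^{n-1}$. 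The final identity here is (\ref{1.8}) applied to $\mathrm{I}K$.

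The key lemma is an approximate-identity statement: $\frac{1-p}{2}T_{-p}g\to Rg$ uniformly for $g\in C(S^{n-1})$ as $p\to1^-$. I would first prove a uniform operator bound. In the coordinate $t=u\cdot v$,
\begin{eqnarray*}
\frac{1-p}{2}\int_{S^{n-1}}|u\cdot v|^{-p}\,du=\frac{1-p}{2}\,\omega\int_{-1}^{1}|t|^{-p}(1-t^2)^{\frac{n-3}{2}}\,dt ,
\end{eqnarray*}
where $\omega=\mathcal{H}^{n-2}(S^{n-2})$. This stays bounded as $p\to1^-$ and tends to $\omega$, because $(1-p)\int_0^1 t^{-p}\,dt=1$ and the mass concentrates at $t=0$. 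Next, split $\int|t|^{-p}\,dt$ into $|t|<\delta$ and $|t|\ge\delta$. The outer part, multiplied by $(1-p)$, tends to $0$. On the inner part, uniform continuity of $g$ lets me replace $g$ on the slice $\{u\cdot v=t\}$ by its values on the great sphere $v^\perp$, with an error $o(1)$ as $\delta\to0$, uniformly in $v$. Together these give the lemma. By linearity and the operator bound, the lemma also gives: if $g_p\to g$ uniformly, then $\frac{1-p}{2}T_{-p}g_p\to Rg$ uniformly.

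I then apply this twice. From (\ref{IpK}), $\rho_{\mathrm{I}_pK}^p=\frac{1}{n-p}\cdot\frac{1-p}{2}T_{-p}\rho_K^{n-p}$. Since $\rho_K^{n-p}\to\rho_K^{n-1}$ uniformly, the lemma gives $\rho_{\mathrm{I}_pK}^p\to\frac{1}{n-1}R\rho_K^{n-1}=\rho_{\mathrm{I}K}$ uniformly. The limit is bounded between two positive constants, so $\rho_{\mathrm{I}_pK}\to\rho_{\mathrm{I}K}$ uniformly, and hence $\rho^{n-p}_{\mathrm{I}_pK}\to\rho^{n-1}_{\mathrm{I}K}$ uniformly. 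A second application of the lemma then yields the needed uniform convergence of $\frac{1-p}{2}T_{-p}\rho^{n-p}_{\mathrm{I}_pK}$, and dominated convergence in the $v$-integral finishes the proof.

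The main obstacle is the approximate-identity lemma. In particular, the normalizing constant must come out so that the limit is exactly $R$ and not a multiple of it. This hinges on the slice measure near the equator having density $(1-t^2)^{(n-3)/2}\to1$ times the $(n-2)$-sphere measure.
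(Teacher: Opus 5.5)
Your proposal is correct and follows the same route as the paper: you reduce to convergence of the density $\frac{1-p}{2p}\rho_K^{n-p}T_{-p}\rho^{n-p}_{\mathrm{I}_pK}$, using $\frac{1-p}{2}T_{-p}\to R$ together with $\mathrm{I}_pK\to\mathrm{I}K$ in the radial metric. The difference is that you prove these two ingredients where the paper cites them (the limit (\ref{1.3}) and Gardner--Giannopoulos' radial convergence), and your uniform operator bound justifies applying (\ref{1.3}) to the $p$-dependent function $\rho^{n-p}_{\mathrm{I}_pK}$, a step the paper leaves implicit.
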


\begin{proof}
 It is known that
\begin{eqnarray}\label{1.3}
\lim _ { p \rightarrow  1 ^- } \frac {  1-p } { 2 } T _ { -p } f = R f ,
\end{eqnarray}
for each $f \in C \left( S ^ { n - 1 } \right)$, see \cite{GZ} or \cite{Koldobsky97}. By \cite[Proposition 3.1]{GG99}, $\mathrm{I}_pK$ converges to $\mathrm{I}K$ in the radial distance when $p\rightarrow 1^-$, i.e.,
\begin{eqnarray}\label{IpKIK}
\lim_{p\rightarrow 1^-}d_R(\mathrm{I}_pK,\mathrm{I}K)=0.
\end{eqnarray}
For any spherical Borel set $\eta\subset S^{n-1}$, by (\ref{IPK}), (\ref{1.3}), (\ref{IpKIK}) and (\ref{1.9}),
\begin{eqnarray}
\lim_{p\rightarrow 1^{-}}\mathcal{I}_p(K,\eta)
&=&\lim_{p\rightarrow 1^{-}}\frac{1-p}{2p}\int_{{\boldsymbol\alpha}_K^{\ast}(\eta)}\rho^{n-p}_K(v)T_{-p}\rho^{n-p}_{\mathrm{I}_pK}(v)dv\nonumber\\
&=&\lim_{p\rightarrow 1^{-}}\frac{1}{p}\int_{{\boldsymbol\alpha}_K^{\ast}(\eta)}\rho^{n-p}_K(v)\frac{1-p}{2}T_{-p}\rho^{n-p}_{\mathrm{I}_pK}(v)dv\nonumber\\
&=&\int_{{\boldsymbol\alpha}_K^{\ast}(\eta)}\rho^{n-1}_K(v)R\rho^{n-1}_{\mathrm{I}K}(v)dv\nonumber\\
&=&(n-1)\int_{{\boldsymbol\alpha}_K^{\ast}(\eta)}\rho^{n-1}_K(v)\rho_{\mathrm{I}^2K}(v)dv\nonumber\\
&=&n\omega_n\widetilde{C}_{n-1}^{\mathrm{a}}(K,\eta).\nonumber
\end{eqnarray}
This is the desired conclusion. \qed
\end{proof}

\begin{proposition}Let $K \in \mathcal{K}_o^n$ with $\mathcal{C}^1$ boundary. Then
$\mathcal{I}_p(K,\cdot)$ converges to zero measure as $p\rightarrow -\infty$.
\end{proposition}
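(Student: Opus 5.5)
The plan is to write $q=-p>0$ and study each factor in (\ref{IPK}) as $q\to\infty$. By (\ref{DIP}),
$$\rho_{\mathrm{I}_pK}(w)^{-q}=\frac{1+q}{2}\int_K|x\cdot w|^{q}\,dx ,$$
so $\rho_{\mathrm{I}_pK}(w)\to 1/H(w)$ with $H(w)=\max_{x\in K}|x\cdot w|=\max\{h_K(w),h_K(-w)\}$. For a Borel set $\eta$, I will rewrite (\ref{IPK}) with Fubini as
$$\mathcal{I}_p(K,\eta)=\frac{1+q}{2q}\int_{\boldsymbol\alpha_K^*(\eta)}\rho_K^{n}(v)\int_{S^{n-1}}\Bigl(\frac{\rho_K(v)|v\cdot w|}{H(w)}\Bigr)^{q}\bigl(H(w)\rho_{\mathrm{I}_pK}(w)\bigr)^{q}\rho_{\mathrm{I}_pK}^{n}(w)\,dw\,dv .$$
The goal is to show that the double integral over all of $S^{n-1}\times S^{n-1}$ tends to $0$. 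Weak convergence to the zero measure then follows, since the measures are nonnegative.

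\textbf{Step 1 (uniform bounds).} Since $K\in\mathcal{K}_o^n$, $H$ is bounded above and below, and $\rho_{\mathrm{I}_pK}^n$ is uniformly bounded. Set $x=\rho_K(v)v\in\partial K$. The first bracket $r(v,w)=|x\cdot w|/H(w)$ satisfies $r\le1$. For the second bracket I will use a convexity estimate. Let $x_0\in K$ attain $H(w)$. The set $\{x\in K: |x\cdot w|\ge (1-1/q)H(w)\}$ contains the homothetic copy $x_0+\frac1q(K-x_0)$, hence has volume at least $q^{-n}V(K)$. This gives
$$\bigl(H\rho_{\mathrm{I}_pK}\bigr)^q\le \frac{C\,q^{n}}{1+q},$$
with $C$ depending only on $K$.

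\textbf{Step 2 (localisation via $C^1$).} This is where the $C^1$ hypothesis enters. For $v$ outside the null set $\omega_K$, the point $x$ has a unique outer normal $\nu=\alpha_K(v)$. Then $r(v,w)=1$ forces $w=\pm\nu$, because $|x\cdot w|=H(w)$ means $x$ or $-x$ is a support point in direction $w$. Moreover, by the $C^1$ regularity of $\partial K$ and compactness, I aim for a uniform bound on the size of $\{w:r(v,w)>1-\epsilon\}$, namely a cap around $\pm\nu$ of radius $o(1)$ as $\epsilon\to0$. A Laplace-type estimate should then give $\int_{S^{n-1}}r(v,w)^q\,dw\to0$ for each such $v$.

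\textbf{Step 3 (the main obstacle).} The decay from Step 2 must be compared with the polynomial growth $q^{n-1}$ coming from Step 1. The crude bound of Step 1 is too generous, so I will not treat the two factors independently. Instead I will estimate the product $r^q(H\rho_{\mathrm{I}_pK})^q$ jointly. For $w$ near $\nu$, the quantity $\int_K|y\cdot w|^q\,dy$ is governed by the same local geometry of $\partial K$ near $x$ that makes $r$ close to $1$. Concretely, I would compute both in a local graph chart of $\partial K$ over $\nu^\perp$. The aim is to show that their ratio, integrated over $w$, is $o(1)$ after multiplication by $(1+q)/(2q)$. Once a $q$-independent integrable majorant in $v$ is available, dominated convergence in $v$ completes the argument.

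I expect this joint Laplace estimate to be the real difficulty. Indeed, if the two effects balance exactly at a flat or strictly convex point, the limit could fail to vanish. I would first test the computation on $K=B^n$ to calibrate the exponents before attempting the general $C^1$ case.
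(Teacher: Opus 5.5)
\textbf{Your Step 3 cannot be closed, because the estimate it aims for is false, and so is the proposition.} The test case you propose already shows this.

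Take $K=B^n$, write $q=-p$, and let $\rho_q$ be the (constant) radial function of $\mathrm{I}_pB^n$. By (\ref{DIP}) and polar coordinates, $\rho_q^{-q}=\frac{1+q}{2(n+q)}\int_{S^{n-1}}|v\cdot w|^q\,dw$. Hence $T_{-p}\rho^{n-p}_{\mathrm{I}_pB^n}=\rho_q^{n+q}\int_{S^{n-1}}|v\cdot w|^q\,dw=\frac{2(n+q)}{1+q}\rho_q^{n}$, and (\ref{IPK}) gives
\[
\mathcal{I}_p(B^n,\eta)=\frac{n+q}{q}\,\rho_q^{n}\,\mathcal{H}^{n-1}(\eta)\longrightarrow\mathcal{H}^{n-1}(\eta).
\]
The limit holds because $\rho_q^{-q}\sim(2\pi/q)^{(n-1)/2}$ forces $\rho_q\to1$. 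In your notation, $\int_{S^{n-1}}r(v,w)^q\,dw$ decays only like $q^{-(n-1)/2}$, while $(H\rho_{\mathrm{I}_pK})^q$ grows exactly like $q^{(n-1)/2}$. The two effects balance, precisely as you feared.

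This is not special to the ball or to smoothness. Apply Fubini in (\ref{IPK}) and use (\ref{IpK}) (equivalently, the variation formula with $f\equiv1$). This gives
\[
\mathcal{I}_p(K,S^{n-1})=\frac{n(n-p)}{|p|}V(\mathrm{I}_pK).
\]
Your Step 1 shows $H\rho_{\mathrm{I}_pK}\to1$ uniformly: combine the trivial bound $(H\rho_{\mathrm{I}_pK})^{-q}\le\frac{(1+q)V(K)}{2}$ with your convexity bound. Hence $\mathcal{I}_p(K,S^{n-1})\to\int_{S^{n-1}}H(u)^{-n}\,du>0$ for every $K\in\mathcal{K}_o^n$. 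Testing weak convergence against $f\equiv1$ rules out the zero limit.

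\textbf{The paper's proof breaks at the same point.} It passes to the limit inside the $u$-integral in (\ref{3.2}), using only the a.e.\ pointwise limit (\ref{3.3}) and the bound (\ref{3.4}) on $\rho_{\mathrm{I}_pK}$. But the factor $\bigl(\int_K\cdots\,dx\bigr)^{-1}$ admits no $q$-independent majorant. For $u$ near $\pm\alpha_K(v)$ it is of order $q^{(n-1)/2}$ (already for the ball), and the whole mass of the integrand concentrates there. Consequently (\ref{3.5}) is false: for $B^n$ its left-hand side equals $\rho_{\mathrm{I}_pB^n}^{n}\to1$.

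So neither a sharper majorant in your Step 3 nor the $\mathcal{C}^1$ hypothesis can rescue the statement. What holds instead is that the total mass tends to $\int_{S^{n-1}}H^{-n}\,du$, which equals $nV(K^*)$ when $K$ is origin-symmetric. For the ball, the measures converge to spherical Lebesgue measure.
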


\begin{proof}
By (\ref{DIP}) and (\ref{2.3}),
\begin{eqnarray}\label{3.2}
\left(\frac{\rho_{\mathrm{I}_p^2K(v)}}{\rho_K(v)}\right)^p&=&\frac{(1-p)\int_{\mathrm{I}_pK}|v\cdot x|^{-p}dx}{2\rho^p_K(v)}\nonumber\\
&=&\frac{(1-p)\int_{S^{n-1}}\int_{0}^{\rho_{\mathrm{I}_pK}(u)}|v\cdot u|^{-p}r^{n-p-1}drdu}{2\rho^p_K(v)}\nonumber\\
&=&\frac{1-p}{2(n-p)}\int_{S^{n-1}}|v\cdot u|^{-p}\rho^{-p}_K(v)\rho^{n-p}_{\mathrm{I}_pK}(u)du\nonumber\\
&=&\frac{1-p}{2(n-p)}\int_{S^{n-1}}|v\cdot u|^{-p}\rho^{-p}_K(v)\left(\frac{1-p}{2(n-p)}\int_{S^{n-1}}|u\cdot w|^{-p}\rho_K(w)^{n-p}dw\right)^{-1}\rho^{n}_{\mathrm{I}_pK}(u)du\nonumber\\
&=&\int_{S^{n-1}}\left(\int_{S^{n-1}}\left(\frac{|u\cdot w|\rho_K(w)}{|v\cdot u|\rho_K(v)}\right)^{-p}\rho_K(w)^{n}dw\right)^{-1}\rho^{n}_{\mathrm{I}_pK}(u)du\nonumber\\
&=&\frac{1}{nV(K)}\int_{S^{n-1}}\left(\frac{1}{V(K)}\int_{K}\left(\frac{|u\cdot x|\rho_K(x)}{|v\cdot u|\rho_K(v)}\right)^{-p}dx\right)^{-\frac{1}{p}\times p}\rho^{n}_{\mathrm{I}_pK}(u)du.
\end{eqnarray}

For $K \in \mathcal{K}_o^n$ with $\mathcal{C}^1$ boundary and fixed $v\in S^{n-1}$, $\mathcal{H}^{n-1}$-a.e. $u\in S^{n-1}$,
\begin{equation*}
\lim_{p\rightarrow-\infty}\left(\frac{1}{V(K)}\int_{K}\left(\frac{|u\cdot x|\rho_K(x)}{|v\cdot u|\rho_K(v)}\right)^{-p}dx\right)^{-\frac{1}{p}}=\operatorname{ess\;sup}\left\{\frac{|u\cdot x|\rho_K(x)}{|v\cdot u|\rho_K(v)}:\;x\in K\right\}=\frac{h_K(u)}{|v\cdot u|\rho_K(v)}>1.
\end{equation*}
Therefore, for $\mathcal{H}^{n-1}$-a.e. $u\in S^{n-1}$,
\begin{eqnarray}\label{3.3}
&&\lim_{p\rightarrow-\infty}\left(\frac{1}{V(K)}\int_{K}\left(\frac{|u\cdot x|\rho_K(x)}{|v\cdot u|\rho_K(v)}\right)^{-p}dx\right)^{-\frac{1}{p}\times p}=0.
\end{eqnarray}
Moreover, by \cite[P.2]{LZ97}, for small enough $r>0,$
\begin{eqnarray}\label{3.4}
\limsup_{p\rightarrow-\infty} \rho_{\mathrm{I}_pK} \leq \lim_{p\rightarrow-\infty}\rho_{\mathrm{I}_p(B_r)}=\lim_{p\rightarrow-\infty}\rho_{\Gamma^{\ast}_{-p}(B_r)}=\rho_{(B_r)^{\ast}}=\frac{1}{r}.
\end{eqnarray}
By (\ref{3.2}), (\ref{3.3}) and (\ref{3.4}), we have
\begin{eqnarray}\label{3.5}
\lim_{p\rightarrow-\infty}\left(\frac{\rho_{\mathrm{I}_p^2K(v)}}{\rho_K(v)}\right)^p=0.
\end{eqnarray}
By (\ref{2.3}), (\ref{3.5}) and that $K \in \mathcal{K}_o^n$ has $\mathcal{C}^1$ boundary, $\mathcal{I}_p(K,\cdot)$ converges to zero mesure as $p\rightarrow -\infty$.
\qed
\end{proof}

\begin{proposition}Let $K \in \mathcal{K}_o^n$ and $V(K)=2$. Then
$\frac{2|p|}{n^2V(\mathrm{I}_pK)}\mathcal{I}_p(K,\cdot)$ converges to cone-volume measure $V_K(\cdot)$ as $p\rightarrow 0$.
\end{proposition}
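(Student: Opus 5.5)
The plan is to work directly from the representation (\ref{2.3}) and show that the only $p$-dependent factor that is not already a power of $\rho_K$, namely $\rho^p_{\mathrm{I}^2_pK}(v)$, behaves like $\tfrac12 V(\mathrm{I}_pK)$ uniformly in $v$ as $p\to0$. By (\ref{2.3}), for every Borel $\eta\subset S^{n-1}$,
$$
\frac{2|p|}{n^2V(\mathrm{I}_pK)}\mathcal{I}_p(K,\eta)=\frac{2(n-p)}{n^2}\int_{{\boldsymbol\alpha}_K^{\ast}(\eta)}\rho_K^{n-p}(v)\,\frac{\rho^p_{\mathrm{I}^2_pK}(v)}{V(\mathrm{I}_pK)}\,dv .
$$
Since $V_K(\eta)=\frac1n\int_{{\boldsymbol\alpha}_K^{\ast}(\eta)}\rho_K^n(v)\,dv$, and $\frac{2n}{n^2}\cdot\frac12=\frac1n$, it suffices to prove that $\rho^p_{\mathrm{I}^2_pK}(v)/V(\mathrm{I}_pK)\to\frac12$ uniformly on $S^{n-1}$. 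We also use that $\rho_K^{n-p}\to\rho_K^n$ uniformly, because $\rho_K$ is bounded above and below by positive constants. This gives convergence for every Borel set $\eta$, so in particular weak convergence.

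\emph{Step 1: uniform two-sided bounds on $\mathrm{I}_pK$ near $p=0$.} Put $F_p(u)=\frac{1-p}{2}\int_K|x\cdot u|^{-p}dx$, so that $\rho_{\mathrm{I}_pK}(u)=F_p(u)^{1/p}$ and $F_0\equiv V(K)/2=1$. For $|p|\le\frac12$ the functions $x\mapsto|\log|x\cdot u||^k$ are integrable over $K$ uniformly in $u\in S^{n-1}$, since $K$ is bounded and $\log|t|$ is locally integrable in one variable. Expanding $|x\cdot u|^{-p}=e^{-p\log|x\cdot u|}$ then gives
$$F_p(u)=1+p\,c(u)+O(p^2),\qquad c(u)=-\tfrac12-\tfrac12\int_K\log|x\cdot u|\,dx,$$
with $O(p^2)$ uniform in $u$. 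Hence $\rho_{\mathrm{I}_pK}=\exp\big(p^{-1}\log F_p\big)\to e^{c(u)}$ uniformly, and $c$ is continuous and bounded. It follows that there are constants $0<r<R$ with $rB^n\subset \mathrm{I}_pK\subset RB^n$ for all small $|p|$, and that $V(\mathrm{I}_pK)$ converges to a positive limit.

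\emph{Step 2: the ratio.} By (\ref{DIP}) applied to $\mathrm{I}_pK$,
$$\rho^p_{\mathrm{I}^2_pK}(v)=\frac{1-p}{2}\int_{\mathrm{I}_pK}|x\cdot v|^{-p}dx .$$
Using Step 1, we have $\mathrm{I}_pK\subset RB^n$, and $\big||x\cdot v|^{-p}-1\big|\le |p|\,|\log|x\cdot v||\,e^{|p||\log|x\cdot v||}$. The right-hand side has integral over $RB^n$ that is $O(|p|)$ uniformly in $v$. Therefore $\int_{\mathrm{I}_pK}|x\cdot v|^{-p}dx=V(\mathrm{I}_pK)+O(|p|)$ uniformly in $v$. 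Dividing by $V(\mathrm{I}_pK)\ge \omega_n r^n>0$ gives the desired uniform limit $\frac12$. Inserting this into the display above and passing to the limit, with dominated convergence over ${\boldsymbol\alpha}_K^{\ast}(\eta)$, yields the proposition.

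The main obstacle is Step 1, namely making the limit $\lim_{p\to0}F_p(u)^{1/p}$ uniform in $u$ for both signs of $p$. For $p<0$ there is no issue. For $p>0$ the weight $|x\cdot u|^{-p}$ is singular on $u^\perp$, so I would bound $\int_K|\log|x\cdot u||^2 e^{|p||\log|x\cdot u||}dx$ by the corresponding integral over a ball $\rho B^n\supset K$. After a rotation that integral is independent of $u$, and it reduces to a finite one-dimensional integral $\int_{-\rho}^{\rho}|\log|t||^2|t|^{-1/2}\,dt$. This gives the required uniformity.
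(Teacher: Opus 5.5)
Your proof is correct and follows essentially the paper's route. Via (\ref{2.3}), both arguments reduce to showing $\rho^p_{\mathrm{I}^2_pK}(v)\approx\frac12 V(\mathrm{I}_pK)$ with $V(\mathrm{I}_pK)\to V(\mathrm{I}_0K)>0$. You apply (\ref{DIP}) directly to $\mathrm{I}_pK$ rather than going through the rewriting (\ref{3.2}), and your uniform bounds $rB^n\subset\mathrm{I}_pK\subset RB^n$ and $O(|p|)$ estimates justify the limit interchanges that the paper leaves implicit.

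One harmless slip: since $\frac{1-p}{2}V(K)=1-p$, the coefficient should be $c(u)=-1-\frac12\int_K\log|x\cdot u|\,dx$, matching the paper's formula for $\rho(\mathrm{I}_0K,u)$; this does not affect the argument.
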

\begin{proof}

Let $\mathrm{I}_0K$ be the star body defined by its radial function
\begin{eqnarray}\label{Drho}
\rho(\mathrm{I}_0K,u):=\lim_{p\rightarrow 0}\rho(\mathrm{I}_pK,u)=\lim_{p\rightarrow 0}\left(\frac{1-p}{2}\int_K|x\cdot u|^{-p}dx\right)^{\frac{1}{p}},\;\;\forall\;u\in S^{n-1}.
\end{eqnarray}
By $V(K)=2$ and simple computations,
\begin{eqnarray}
\rho(\mathrm{I}_0K,u)=\exp\left\{-\frac{1}{2}\int_K\ln|u\cdot x|dx-1\right\},\;\;\forall\;u\in S^{n-1}.
\end{eqnarray}
By (\ref{Drho}), we have
\begin{eqnarray}\label{3.11}
V(\mathrm{I}_0K)=\lim_{p\rightarrow 0}V(\mathrm{I}_pK).
\end{eqnarray}

By $V(K)=2$, (\ref{3.2}) and (\ref{Drho}), we have
\begin{eqnarray}\label{3.12}
\lim_{p\rightarrow0}\left(\frac{\rho_{\mathrm{I}_p^2K(v)}}{\rho_K(v)}\right)^p
&=&\lim_{p\rightarrow0}\int_{S^{n-1}}\left(n\int_{K}\left(\frac{|u\cdot x|\rho_K(x)}{|v\cdot u|\rho_K(v)}\right)^{-p}dx\right)^{-1}\rho^{n}_{\mathrm{I}_pK}(u)du\nonumber\\
&=&\frac{1}{2n}\int_{S^{n-1}}\rho^n_{\mathrm{I}_0K}(u)du=\frac{1}{2}V(\mathrm{I}_0K).
\end{eqnarray}
Therefore, by (\ref{2.3}), (\ref{3.11}) and (\ref{3.12}), for any spherical Borel set $\eta\subset S^{n-1}$,
\begin{eqnarray}
\lim_{p\rightarrow 0}\frac{2|p|}{n^2V(\mathrm{I}_pK)}\mathcal{I}_p(K,\eta)
&=&\lim_{p\rightarrow0}\frac{2(n-p)}{n^2V(\mathrm{I}_pK)}\int_{{\boldsymbol\alpha}_K^{\ast}(\eta)}\rho^{n-p}_K(v)\rho^{p}_{\mathrm{I}_p^2K}(v)dv\nonumber\\
&=&\lim_{p\rightarrow0}\frac{2(n-p)}{n^2V(\mathrm{I}_pK)}\int_{{\boldsymbol\alpha}_K^{\ast}(\eta)}\rho^{n}_K(v)\left(\frac{\rho_{\mathrm{I}_p^2K}(v)}{\rho_K(v)}\right)^pdv\nonumber\\
&=&\frac{1}{n}\int_{{\boldsymbol\alpha}_K^{\ast}(\eta)}\rho^{n}_K(v)dv=V_K(\eta).\nonumber
\end{eqnarray}
\qed
\end{proof}

\section{The Minkowski problem via maximization}
If $\mu$ is a nonzero finite even Borel measure on $S^{n-1}$, then the {\it entropy functional} of $\mu$, $E_\mu: C_e^{+}\left(S^{n-1}\right) \rightarrow \mathbb{R}$, is defined by
\begin{equation}\label{Emuf}
E_\mu(f)=-\frac{1}{|\mu|} \int_{S^{n-1}} \log f(v) d \mu(v).
\end{equation}
For $K\in\mathcal{K}_e^n$, let $E_{\mu}(K)$ denote $E_{\mu}(h_K)$.

Let $p\in(-\infty,0)\cup (0,1)$ and $\mu$ be a  nonzero finite even Borel measure on $S^{n-1}$. Define the functional
$$
\Phi_{\mu,p}: C_e^{+}\left(S^{n-1}\right) \rightarrow \mathbb{R}
$$
by
\begin{equation}\label{phimupf}
\Phi_{\mu,p}(f)=\frac{p}{n(n-p)} \log V(\mathrm{I}_p[f])+E_\mu(f).
\end{equation}
By (\ref{IpK}) and the definition of $E_\mu(f)$,  $\Phi_{\mu,p}$ is homogenous of degree
0, i.e., $\Phi_{\mu,p}(c f)=\Phi_{\mu,p}(f)$, for all real $c>0$.

\medskip

\noindent{\bf Maximization Problem I.} Given a  nonzero finite even Borel measure $\mu$ on $S^{n-1}$, does there exist $f_0 \in C_e^{+}\left(S^{n-1}\right)$ such that
$$
\Phi_{\mu,p}\left(f_0\right)=\sup \left\{\Phi_{\mu,p}(f): f \in C_e^{+}\left(S^{n-1}\right)\right\} ?
$$

Note that the set of support functions of convex bodies in $\mathcal{K}_e^n$ is a convex sub-cone of $C_e^{+}\left(S^{n-1}\right)$, thus we can restrict the functional $
\Phi_{\mu,p}: \mathcal{K}_e^n \rightarrow \mathbb{R}$,
which maps $K$ to $\Phi_{\mu,p}\left(h_K\right)$, and we will denote it as $\Phi_{\mu,p}(K)$ in brief. This leads to the following maximization problem.

\medskip

\noindent {\bf Maximization Problem II.} Given a  nonzero finite even Borel measure $\mu$ on $S^{n-1}$, does there exist $K_0 \in \mathcal{K}_e^n$ such that
$$
\Phi_{\mu,p}\left(K_0\right)=\sup \left\{\Phi_{\mu,p}(K): K \in \mathcal{K}_e^n\right\} ?
$$

We first show that the solution of Maximization Problem II is the solution of Maximization Problem I.

\begin{lemma} Given a nonzero finite even Borel measure $\mu$, suppose there is a convex body $K_0 \in \mathcal{K}_e^n$ such that
\begin{eqnarray}\label{PhimupK}
\Phi_{\mu,p}\left(K_0\right)=\sup \left\{\Phi_{\mu,p}(K): K \in \mathcal{K}_e^n\right\} .
\end{eqnarray}
Then
\begin{eqnarray}\label{phimu}
\Phi_{\mu,p}\left(K_0\right)=\sup \left\{\Phi_{\mu,p}(f): f \in C_e^{+}\left(S^{n-1}\right)\right\} .
\end{eqnarray}
\end{lemma}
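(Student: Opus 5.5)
The plan is to compare an arbitrary $f\in C_e^{+}(S^{n-1})$ with its Wulff shape $[f]$, exactly as in the log-Minkowski setting of \cite{BLYZ13}. Given $f\in C_e^{+}(S^{n-1})$, the Wulff shape $[f]$ defined in (\ref{W2.9}) is an origin-symmetric convex body: it is symmetric because $f$ is even, and it contains the origin in its interior because $f$ is positive, hence bounded below by a positive constant. So $[f]\in\mathcal{K}_e^n$. Its support function satisfies
$$h_{[f]}\le f \quad\text{on } S^{n-1},$$
since for $x\in[f]$ and $v\in S^{n-1}$ we have $x\cdot v\le f(v)$.

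Two observations then suffice.

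\emph{The volume term is unchanged.} The functional $\Phi_{\mu,p}(f)$ depends on $f$ through $V(\mathrm{I}_p[f])$, and $[h_{[f]}]=[f]$. Hence $V(\mathrm{I}_p[h_{[f]}])=V(\mathrm{I}_p[f])$. Here $\mathrm{I}_p[f]$ is well defined by (\ref{DIP}), since $[f]$ is a convex body.

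\emph{The entropy term can only increase.} Because $h_{[f]}\le f$, $\log$ is monotone and $\mu\ge 0$,
$$E_\mu(h_{[f]})=-\frac{1}{|\mu|}\int_{S^{n-1}}\log h_{[f]}\,d\mu\ \ge\ -\frac{1}{|\mu|}\int_{S^{n-1}}\log f\,d\mu=E_\mu(f).$$
Both integrals are finite: both functions are continuous and positive on the compact sphere, and $h_{[f]}$ is positive because $[f]$ contains a ball around $o$.

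Combining the two observations gives
$$\Phi_{\mu,p}(f)\le \Phi_{\mu,p}([f])\le \Phi_{\mu,p}(K_0)$$
for every $f\in C_e^{+}(S^{n-1})$, where the last inequality is the hypothesis (\ref{PhimupK}). Taking the supremum over $f$ yields $\sup_f\Phi_{\mu,p}(f)\le\Phi_{\mu,p}(K_0)$. The reverse inequality holds because $h_{K_0}\in C_e^{+}(S^{n-1})$. This proves (\ref{phimu}).

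No step is a genuine obstacle. The only points needing care are that the volume term truly depends on $f$ only through $[f]$, which holds by the definition (\ref{phimupf}), and that the inequality $h_{[f]}\le f$ has the right direction for the entropy. The sign of the coefficient $p/(n(n-p))$ plays no role, since that term is exactly equal for $f$ and $h_{[f]}$.
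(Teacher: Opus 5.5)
Your proposal is correct and matches the paper's proof: you compare $f$ with its Wulff shape $[f]$, use $[h_{[f]}]=[f]$ to keep the volume term unchanged and $h_{[f]}\le f$ to show the entropy term can only increase, and then invoke the maximality of $K_0$. You also verify that $[f]\in\mathcal{K}_e^n$ and state the reverse inequality via $h_{K_0}\in C_e^{+}(S^{n-1})$, both of which the paper leaves implicit.
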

\begin{proof}
Let $f\in C_e^+(S^{n-1})$, then $h_{[f]}(u)\leq f(u)$ for any $u\in S^{n-1}$. Moreover, $\left[h_{[f]}\right]=[f]$. Thus,
\begin{eqnarray}
\Phi_{\mu,p}\left([f]\right)&=&\Phi_{\mu,p}\left(h_{[f]}\right)\nonumber\\
&=&\frac{p}{n(n-p)} \log V(\mathrm{I}_p[f])-\frac{1}{|\mu|} \int_{S^{n-1}} \log h_{[f]}(u) d \mu(u)\nonumber\\
&\geq& \frac{p}{n(n-p)} \log V(\mathrm{I}_p[f])-\frac{1}{|\mu|} \int_{S^{n-1}} \log f(u) d \mu(u)\nonumber\\
&=& \Phi_{\mu,p}\left(f\right).
\end{eqnarray}
Thus, Maximization Problem I can be obtained in the support function of some $K_0\in \mathcal{K}_e^n$. The desired equality (\ref{phimu}) now follows from (\ref{PhimupK}).
\qed
\end{proof}

The following lemma shows that the solution of Maximization Problem I is the solution of the affine Minkowski problem for the $p$-affine dual curvature measure.

\begin{lemma} Let $p\in (-\infty,0)\cup(0,1)$. Given a nonzero finite even Borel measure $\mu$, suppose there is a convex body $K_0 \in \mathcal{K}_e^n$ such that
\begin{eqnarray}\label{leftK0}
\Phi_{\mu,p}\left(K_0\right)=\sup \left\{\Phi_{\mu,p}(f): f \in C_e^{+}\left(S^{n-1}\right)\right\}.
\end{eqnarray}
Then $\mu=\mathcal{I}_p\left(c K_0, \cdot\right)$ for a positive real number $c$.
\end{lemma}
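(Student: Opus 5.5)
Since $K_0$ maximizes $\Phi_{\mu,p}$ over all of $C_e^+(S^{n-1})$, the first variation of $\Phi_{\mu,p}$ along any logarithmic family through $h_{K_0}$ must vanish. We compute that variation and read off the measure identity.

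Fix an arbitrary even $f\in C_e(S^{n-1})$ and set $h_t=h_{K_0}e^{tf}$ for $|t|$ small. Then $h_t\in C_e^+(S^{n-1})$, and $[h_t]=[K_0,f]_t$ is the logarithmic family of Wulff shapes formed by $(h_{K_0},f)$. Since $K_0$ is origin-symmetric and $f$ is even, each $[h_t]$ lies in $\mathcal{K}_e^n$. Moreover $[h_0]=K_0$. The function
$$t\mapsto \Phi_{\mu,p}(h_t)=\frac{p}{n(n-p)}\log V(\mathrm{I}_p[K_0,f]_t)-\frac{1}{|\mu|}\int_{S^{n-1}}\bigl(\log h_{K_0}+tf\bigr)\,d\mu$$
therefore attains its maximum at $t=0$, by (\ref{leftK0}).

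The entropy term is affine in $t$, with derivative $-\frac{1}{|\mu|}\int f\,d\mu$. For the first term we use Theorem \ref{dtrhoKt}. It gives
$$\left.\frac{d}{dt}\right|_{t=0}V(\mathrm{I}_p[K_0,f]_t)=\operatorname{sgn}(p)\int f\,d\mathcal{I}_p(K_0,\cdot).$$
Since $V(\mathrm{I}_pK_0)>0$, the chain rule makes $\Phi_{\mu,p}(h_t)$ differentiable at $0$. Setting the derivative equal to zero and using $p\,\operatorname{sgn}(p)=|p|$, we get
$$\frac{|p|}{n(n-p)V(\mathrm{I}_pK_0)}\int_{S^{n-1}}f\,d\mathcal{I}_p(K_0,\cdot)=\frac{1}{|\mu|}\int_{S^{n-1}}f\,d\mu \quad\text{for all } f\in C_e(S^{n-1}).$$
Both measures are even: $\mu$ by hypothesis, and $\mathcal{I}_p(K_0,\cdot)$ because $K_0$ is origin-symmetric, so $\boldsymbol\alpha^*_{K_0}(-\eta)=-\boldsymbol\alpha^*_{K_0}(\eta)$ and the densities in (\ref{IPK}) are even. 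Testing with even continuous functions therefore determines the measures, and by the Riesz representation theorem
$$\mu=\frac{|\mu|\,|p|}{n(n-p)V(\mathrm{I}_pK_0)}\,\mathcal{I}_p(K_0,\cdot).$$

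The last step absorbs the constant by scaling. From (\ref{np2}) we need the homogeneity of $\mathcal{I}_p(\lambda K,\cdot)$. The reverse radial Gauss image is unchanged under dilation. By (\ref{DIP}), $\rho_{\mathrm{I}_p\lambda K}=\lambda^{(n-p)/p}\rho_{\mathrm{I}_pK}$. Applying (\ref{DIP}) again to $\mathrm{I}_pK$, $\rho^p_{\mathrm{I}^2_p\lambda K}=\lambda^{(n-p)^2/p}\cdot\lambda^{-p}\rho^p_{\mathrm{I}^2_pK}$ up to bookkeeping. Integrating over $\lambda K$ contributes another factor $\lambda^n$. Altogether
$$\mathcal{I}_p(\lambda K,\cdot)=\lambda^{\gamma}\mathcal{I}_p(K,\cdot),\qquad \gamma=\frac{n(n-p)}{p}\neq 0.$$
This is consistent with $V(\mathrm{I}_p\lambda K)=\lambda^{n(n-p)/p}V(\mathrm{I}_pK)$ and with the variation formula. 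Since $\gamma\ne0$, we choose $c=\bigl(|\mu||p|/(n(n-p)V(\mathrm{I}_pK_0))\bigr)^{1/\gamma}>0$, which yields $\mu=\mathcal{I}_p(cK_0,\cdot)$.

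The main point needing care is that Theorem \ref{dtrhoKt} is only established for $K\in\mathcal{K}_o^n$ with general continuous $f$. We apply it to even $f$, for which the Wulff shapes remain symmetric, so restricting to even test functions is harmless. The remaining delicate part is just the correct computation of the homogeneity exponent $\gamma$ and the check that it is nonzero for $p\in(-\infty,0)\cup(0,1)$, which holds since $n-p>0$.
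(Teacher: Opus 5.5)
Your proof is correct and is essentially the paper's argument: you take the first variation of $\Phi_{\mu,p}$ along the logarithmic Wulff family $h_{K_0}e^{tf}$ with $f$ even, apply Theorem~\ref{dtrhoKt}, and absorb the constant by a dilation. The only difference is the order of the normalization. The paper first picks $c$ with $V(\mathrm{I}_p(cK_0))=|p||\mu|/(n(n-p))$, using the $0$-homogeneity of $\Phi_{\mu,p}$ so that $cK_0$ is still a maximizer, and then varies at $cK_0$; you vary at $K_0$ and rescale afterwards via the degree-$n(n-p)/p$ homogeneity of $\mathcal{I}_p(\cdot,\eta)$, which gives the same $c$.
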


\begin{proof}
 Suppose $K_0$ is the maximizer in (\ref{leftK0}). Since the functional $\Phi_{\mu,p}(K)$ is homogenous of degree $0$ and $V\left(\mathrm{I}_pK\right)$  is homogenous of degree $n(n-p)/p$ by (\ref{IpK}), we can find a positive real number $c$ such that
\begin{eqnarray}\label{VIPck}
V\left(\mathrm{I}_p(c K_0)\right)=\frac{|p||\mu|}{n(n-p)}
\end{eqnarray}
and $cK_0$ is also the maximizer in (\ref{leftK0}). Denote $K_t=\left[cK_0,g\right]_t$, where $g\in C_e(S^{n-1})$.
By Theorem \ref{dtrhoKt} and (\ref{VIPck}),
$$
\begin{aligned}
0 & =\left.\frac{d}{d t} \Phi_{\mu,p}\left(K_t\right)\right|_{t=0} \\
& =\left.\frac{d}{d t}\left(\frac{p}{n(n-p)} \log V\left(\mathrm{I}_p K_t\right)-\frac{1}{|\mu|} \int_{S^{n-1}} \log h_{K_t} d \mu\right)\right|_{t=0} \\
& =\frac{p}{n(n-p)} \cdot \frac{1}{V(\mathrm{I}_p(cK_0))}\left(\left.\frac{d}{d t} V\left(\mathrm{I}_p K_t\right)\right|_{t=0}\right)-\frac{1}{|\mu|} \int_{S^{n-1}} g(u) d \mu \\
& =\frac{|p|}{n(n-p)} \cdot \frac{1}{V(\mathrm{I}_p(cK_0))} \int_{S^{n-1}} g(u) d \mathcal{I}_p\left(c K_0, u\right)-\frac{1}{|\mu|} \int_{S^{n-1}} g(u) d \mu\\
& =\frac{1}{|\mu|} \int_{S^{n-1}} g(u) d \mathcal{I}_p\left(c K_0, u\right)-\frac{1}{|\mu|} \int_{S^{n-1}} g(v) d \mu,
\end{aligned}
$$
which implies
$$\int_{S^{n-1}} g(u) d\mathcal{I}_p\left(c K_0, u\right)=\int_{S^{n-1}} g(v) d \mu$$
for every $g \in C_e\left(S^{n-1}\right)$. Thus
$\mu=\mathcal{I}_p\left(c K_0, \cdot\right)$.
\qed
\end{proof}

\section{Solutions to the affine Minkowski problem}

\begin{lemma}\label{T2}
Suppose $p\in (-\infty,0)\cup(0,1)$. For all $K \in \mathcal{K}_e^n$, there exists constant $c_{n,p}>0$, independent of the body $K$, such that
\begin{eqnarray}\label{IpKandIK}
c_{n,p}V(K)^{\frac{1-p}{p}} \rho(\mathrm{I} K, u) \leq \rho\left(\mathrm{I}_p K, u\right) \leq 2^{\frac{p-1}{p}}V(K)^{\frac{1-p}{p}} \rho(\mathrm{I} K, u),\;\;\;\forall u \in S^{n-1}.
\end{eqnarray}
\end{lemma}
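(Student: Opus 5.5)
The plan is to reduce everything to a one-dimensional estimate on the parallel section function of $K$ in the direction $u$. Fix $u\in S^{n-1}$ and write $h=h_K(u)$. Set $A(t)=V_{n-1}\bigl(K\cap(u^\perp+tu)\bigr)$ for $t\in[-h,h]$. By Fubini,
\begin{equation*}
\rho(\mathrm{I}_pK,u)^p=\frac{1-p}{2}\int_{-h}^{h}|t|^{-p}A(t)\,dt,\qquad \int_{-h}^{h}A(t)\,dt=V(K),\qquad A(0)=\rho(\mathrm{I}K,u).
\end{equation*}
Since $K$ is origin-symmetric and convex, $A$ is even. By Brunn's theorem $A^{1/(n-1)}$ is concave on $[-h,h]$, so $A(t)\le A(0)$ and $A(t)\ge (1-|t|/h)^{n-1}A(0)$. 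Integrating the last bound gives $V(K)\ge 2hA(0)/n$. Integrating $A\le A(0)$ gives $V(K)\le 2hA(0)$. Hence
\begin{equation*}
\frac{V(K)}{2A(0)}\le h\le \frac{nV(K)}{2A(0)}.
\end{equation*}
The proof then splits according to the sign of $p$. This matters because $|t|^{-p}$ is decreasing in $|t|$ for $p\in(0,1)$ and increasing for $p<0$, and because raising to the power $1/p$ reverses inequalities when $p<0$.

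\emph{The bound $\rho(\mathrm{I}_pK,u)\le 2^{\frac{p-1}{p}}V(K)^{\frac{1-p}{p}}\rho(\mathrm{I}K,u)$ (both signs of $p$).} Use a bathtub/rearrangement argument. Among even densities $0\le A\le A(0)$ with total mass $V(K)$, consider the extremal density $A(0)\mathbf{1}_{[-s,s]}$ with $s=V(K)/(2A(0))$.
\begin{itemize}
\item For $0<p<1$, this density maximizes $\int|t|^{-p}A$.
\item For $p<0$, it minimizes $\int|t|^{-p}A$.
\end{itemize}
A direct computation gives
\begin{equation*}
\frac{1-p}{2}\cdot 2A(0)\frac{s^{1-p}}{1-p}=2^{p-1}A(0)^pV(K)^{1-p}.
\end{equation*}
So for $p>0$ we get $\rho^p\le 2^{p-1}A(0)^pV^{1-p}$, and for $p<0$ we get $\rho^p\ge 2^{p-1}A(0)^pV^{1-p}$. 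Taking the $1/p$-th power gives the upper bound in (\ref{IpKandIK}) in both cases, since the inequality reversal for $p<0$ is compensated by the negative exponent.

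\emph{The lower bound.} Here the two cases are handled separately.
\begin{itemize}
\item For $0<p<1$, insert the concavity bound $A(t)\ge(1-|t|/h)^{n-1}A(0)$. This gives
\begin{equation*}
\int|t|^{-p}A\ge 2A(0)h^{1-p}B(1-p,n).
\end{equation*}
Since $1-p>0$, use $h\ge V/(2A(0))$. This yields $\rho^p\ge c\,A(0)^pV^{1-p}$ with $c=(1-p)2^{p-1}B(1-p,n)$, and then the $1/p$-th power.
\item For $p<0$, bound $|t|^{-p}\le h^{-p}$. This gives $\int|t|^{-p}A\le h^{-p}V$. Then use $h\le nV/(2A(0))$, which applies since $-p>0$, to get
\begin{equation*}
\rho^p\le \frac{1-p}{2}\Bigl(\frac n2\Bigr)^{-p}A(0)^pV^{1-p}.
\end{equation*}
Raising to $1/p<0$ gives the lower bound.
\end{itemize}
In each case the constants depend only on $n$ and $p$; $c_{n,p}$ is the corresponding constant raised to the power $1/p$.

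I expect the main obstacle to be the bookkeeping. Specifically, one must check that the rearrangement step is applied in the correct direction in each sign case, and that the inequality reversals under $x\mapsto x^{1/p}$ for $p<0$ land on the stated two-sided bound. The geometric inputs themselves (Brunn concavity and evenness of $A$) are standard.
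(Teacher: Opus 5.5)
Your proof is correct, and it starts from the same reduction as the paper. Everything is expressed through the even, Brunn-concave section function $A=A_{K,u}$ via $\rho(\mathrm{I}_pK,u)^p=\frac{1-p}{2}\int|t|^{-p}A(t)\,dt$, $\int A=V(K)$ and $A(0)=\rho(\mathrm{I}K,u)$.

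\textbf{Upper bound.} Your bathtub comparison with $A(0)\mathbf{1}_{[-s,s]}$ is exactly what the paper extracts from the Milman--Pajor monotonicity of $F_1$ (with $Q=[-1,1]$ and $f=A/A(0)$). Here the two proofs agree up to packaging. Yours is self-contained, and both give the sharp constant $2^{(p-1)/p}$, with equality for cylinders.

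\textbf{Lower bound.} This is where you genuinely differ.
The paper uses the second Milman--Pajor monotonicity, of $F_2$. It compares $A/A(0)$ with the cone profile $(1-x)^{n-1}$ through the convex function $\psi=1-(A/A(0))^{1/(n-1)}$. This treats both signs of $p$ at once and gives $c_{n,p}=2^{(p-1)/p}((1-p)B(1-p,n)n^{1-p})^{1/p}$, which is attained by double cones.
You instead use one of two elementary estimates:
\begin{itemize}
\item for $0<p<1$, the pointwise bound $A(t)\ge(1-|t|/h)^{n-1}A(0)$;
\item for $p<0$, the trivial bound $|t|^{-p}\le h^{-p}$.
\end{itemize}
You then combine it with the sandwich $V(K)/(2A(0))\le h\le nV(K)/(2A(0))$.

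This route is fully elementary, but in each sign case it chains two estimates whose extremal bodies differ. For $p>0$ the profile bound is sharp for cones, while $h\ge V/(2A(0))$ is sharp for cylinders. As a result your constants are smaller than the paper's by the factor $n^{-(1-p)/p}$ when $0<p<1$, and by the factor $(nB(1-p,n))^{-1/p}\le 1$ when $p<0$.

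The lemma only asserts the existence of some $c_{n,p}>0$. The existence theorem uses the left inequality only with an unspecified constant. So this loss of sharpness costs nothing for the paper's purposes.
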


\begin{proof}
The following two facts can be found in the paper \cite{MP89}. For a measurable function $f: \mathbb{R}^n \rightarrow [0,1]$ and $Q \in \mathcal{K}_e^n$, the function
$$
F_1(q):=\left(\frac{\int_{\mathbb{R}^n} \rho_Q(x)^{-q} f(x) d x}{\int_Q \rho_Q(x)^{-q} d x}\right)^{1 /(n+q)}
$$
is increasing on $(-n, \infty)$.
Suppose $\psi: [0,+\infty) \rightarrow [0,+\infty)$ satisfies $\psi(0)=0$, $\psi$ and $\psi(x) / x$ are increasing on $(0, t]$, and $\psi(x)=\psi(t)$ for $x \geqslant t$. Let $g: [0,+\infty) \rightarrow [0,+\infty)$ be a decreasing, continuous function which vanishes at $\psi(t)$. Then
$$
F_2(q):=\left(\frac{\int_0^{\infty} g(\psi(x)) x^q d x}{\int_0^{\infty} h(x) x^q d x}\right)^{1 /(1+q)}
$$
is a decreasing function on $(-1, \infty)$.

To prove the right inequality in (\ref{IpKandIK}), let $Q:=[-1,1] \subset \mathbb{R}$ and
$f(x):=\frac{A_{K, u}(x)}{A_{K, u}(0)}$,
 where
$$A_{K,u}(x)=V_{n-1}\left(K\cap \{z\in\mathbb{R}^n:\;z\cdot u =x\}\right).$$
Brunn's theorem shows that ${A_{K, u}(\cdot)}^{1/ (n-1)}$ is concave on $[-h(K, u), h(K, u)]$, which implies that the even function ${A_{K, u}(\cdot)}$ is decreasing on $[0,\infty)$,
and thus $f(x)\leq f(0)=1,~\forall x\in\mathbb{R}.$ Therefore, $F_1(-p) \leqslant$ $F_1(0)$ when $p\in (0,1)$, that is
$$
\left(\frac{(1-p) \int_{\mathbb{R}}|x|^{-p} A_{K, u}(x) d x}{2V_{n-1}\left(K \cap u^{\perp}\right)}\right)^{1 /(1-p)} \leqslant \frac{V(K)}{2V_{n-1}\left(K \cap u^{\perp}\right)}.
$$
And $F_1(-p) \geqslant$ $F_1(0)$ when $p\in (-\infty,0)$, that is
$$
\left(\frac{(1-p) \int_{\mathbb{R}}|x|^{-p} A_{K, u}(x) d x}{2V_{n-1}\left(K \cap u^{\perp}\right)}\right)^{1 /(1-p)} \geqslant \frac{V(K)}{2V_{n-1}\left(K \cap u^{\perp}\right)}.
$$
In the above two cases, by Fubini's theorem and (\ref{DIP}), we can get
$$\rho\left(\mathrm{I}_p K, u\right) \leqslant 2^{\frac{p-1}{p}}V(K)^{\frac{1-p}{p}} \rho(\mathrm{I} K, u).$$

To establish the left inequality in (\ref{IpKandIK}), let $g(x)=(1-x)^{n-1} \mathbb{I}_{[0,1]}(x), x \geqslant 0$ and
$$\psi(x)=1-\left(\frac{A_{K, u}(x)}{A_{K, u}(0)}\right)^{\frac{1}{n-1}}$$ for some given $u \in S^{n-1}$, ($\mathbb{I}$ denotes the characteristic function). Brunn's theorem shows that $\psi$ is a convex function on $[0, h(K, u)]$. Therefore $g$ and $\psi$ satisfy the above conditions to guarantee the monotonicity of $F_2$. Hence $F_2(-p) \geqslant F_2(0)$ when $p\in (0,1)$, i.e.,
\begin{eqnarray}\label{AKu}
\left(\frac{\int_0^{\infty} A_{K, u}(x) x^{-p} d x}{V_{n-1}\left(K \cap u^{\perp}\right) B(1-p, n)}\right)^{1 /(1-p)} \geqslant \frac{nV(K)}{2V_{n-1}\left(K \cap u^{\perp}\right)},
\end{eqnarray}
where $B(\cdot,\cdot)$ denotes the beta function. And $F_2(-p) \leqslant F_2(0)$ when $p\in (-\infty,0)$,  i.e.,
\begin{eqnarray}\label{AKu1}
\left(\frac{\int_0^{\infty} A_{K, u}(x) x^{-p} d x}{V_{n-1}\left(K \cap u^{\perp}\right)B(1-p, n)}\right)^{1 /(1-p)} \leqslant \frac{nV(K)}{2V_{n-1}\left(K \cap u^{\perp}\right)}.
\end{eqnarray}

By (\ref{DIP}), (\ref{AKu}) and (\ref{AKu1}), we obtain, for $p\in (-\infty,0)\cup (0,1),$
$$
\rho\left(\mathrm{I}_p K, u\right) \geqslant 2^{\frac{p-1}{p}}\left((1-p)\beta(1-p,n)n^{1-p}\right)^{1 / p}V(K)^{\frac{1-p}{p}} \rho(\mathrm{I} K, u).
$$
Let $$C_{n,p}:=2^{\frac{p-1}{p}}\left((1-p)\beta(1-p,n)n^{1-p}\right)^{1 / p}.$$ We get the desired results.
\qed
\end{proof}

The proof of the following Lemma \ref{L7.3} follows along the same lines as that of \cite[Lemma 4.2]{BLYZZ19}, and thus we omit the proof. The difference between the two lemmas is that $q\in (1,n)$ in \cite[Lemma 4.2]{BLYZZ19} and $q=n$ in the following Lemma \ref{L7.3}.

\begin{lemma}\label{L7.3}  Let $\epsilon_0>0$. Suppose that $\left(e_{1l}, \ldots, e_{nl}\right)$, where $l=1,2, \ldots$, is a sequence of ordered orthonormal basis of $\mathbb{R}^n$ converging to the ordered orthonormal basis $\left(e_1, \ldots, e_n\right)$. Suppose also that $\left(a_{1l}, \ldots, a_{nl}\right)$ is a sequence of $n$-tuples satisfying $0<a_{1l} \leq a_{2l} \leq \cdots \leq a_{nl}$ for all $l$, and there exists an $\epsilon_0>0$, such that $a_{nl}>\epsilon_0$ for all $l$. For each $l=1,2, \ldots$, let
$$
Q_l=\left\{x \in \mathbb{R}^n:\left|x \cdot e_{1l}\right|^2/a_{1l}^2+\cdots+\left|x \cdot e_{nl}\right|^2/a_{nl}^2\leq 1\right\}
$$
denote the ellipsoid generated by the $\left(e_{1l}, \ldots, e_{nl}\right)$ and $\left(a_{1l}, \ldots, a_{nl}\right)$. If $\mu$ is an even Borel measure on $S^{n-1}$ that satisfies the strict subspace concentration inequality (\ref{1.4}), then there exists $t_0$, $\delta_0$, $l_0>0$ and $c_{n, \epsilon_0, \delta_0, t_0}$, independent of $l$, such that for each $l>l_0$,
\begin{eqnarray}\label{Emu}
E_\mu\left(Q_l\right) \leq-\frac{\log \left(a_{1l} \cdots a_{n-1,l}\right)}{n}+t_0 \log a_{1l}+c_{n, \epsilon_0,\delta_0, t_0}.
\end{eqnarray}

\end{lemma}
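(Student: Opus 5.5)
We follow the argument of \cite[Lemma 4.2]{BLYZZ19} with the exponent $q$ replaced by $n$. The entropy of the ellipsoid $Q_l$ is computed from its support function,
\[
h_{Q_l}(v)=\Bigl(\sum_{i=1}^n a_{il}^2 (v\cdot e_{il})^2\Bigr)^{1/2}\ \ge\ \max_i a_{il}|v\cdot e_{il}|,
\]
so that
\[
E_\mu(Q_l)\le -\frac{1}{|\mu|}\int_{S^{n-1}}\log\max_i\bigl(a_{il}|v\cdot e_{il}|\bigr)\,d\mu(v).
\]
The plan is to partition the sphere according to which coordinate is ``large''.

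\textbf{Step 1: partition of the sphere.} Fix a small $\delta_0>0$. For $i=1,\dots,n$ let $\xi_{i,l}=\mathrm{span}\{e_{il},\dots,e_{nl}\}$ and $\xi_i=\mathrm{span}\{e_i,\dots,e_n\}$. Put
\[
A_{i,l}=\{v\in S^{n-1}: |v\cdot e_{il}|\ge\delta_0,\ |v\cdot e_{jl}|<\delta_0 \text{ for all } j>i\}.
\]
These sets are disjoint and cover $S^{n-1}$; the covering uses that some coordinate satisfies $|v\cdot e_{jl}|\ge 1/\sqrt n\ge\delta_0$. On $A_{i,l}$ we have $h_{Q_l}(v)\ge \delta_0 a_{il}$. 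This gives
\[
E_\mu(Q_l)\le -\log\delta_0-\frac1{|\mu|}\sum_{i=1}^n \mu(A_{i,l})\log a_{il}.
\]
Summation by parts, using the partial sums $\mu\bigl(\bigcup_{j\ge i}A_{j,l}\bigr)$, rewrites the sum in terms of the $\log a_{il}$ and the differences $\log a_{il}-\log a_{i-1,l}$.

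\textbf{Step 2: use of subspace concentration.} We have $\bigcup_{j\le i}A_{j,l}\subset\{v: |v\cdot e_{jl}|<\delta_0 \text{ for all } j>i\}$, which is a $\delta_0$-neighbourhood of $\mathrm{span}\{e_{1l},\dots,e_{il}\}\cap S^{n-1}$. Since $e_{jl}\to e_j$, for $l>l_0$ this set lies inside a $2\delta_0$-neighbourhood of $\mathrm{span}\{e_1,\dots,e_i\}\cap S^{n-1}$. By the strict inequality (\ref{1.4}) and outer regularity of the finite measure $\mu$, we can choose $\delta_0$ and $t_0>0$ so that
\[
\frac{\mu\bigl(\bigcup_{j\le i}A_{j,l}\bigr)}{|\mu|}\le \frac{i}{n}-t_0 \quad\text{for } i=1,\dots,n-1 .
\]
This is possible because only finitely many subspaces are involved.

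\textbf{Step 3: Abel summation.} Order the $a_{il}$ increasingly. The cumulative weight carried by $\log a_{1l},\dots,\log a_{il}$ is then at most $i/n-t_0$, and the increments $\log a_{i+1,l}-\log a_{il}$ are nonnegative. With $\log a_{nl}\ge\log\epsilon_0$, the total weight is $1$. Abel summation gives
\[
-\frac1{|\mu|}\sum_i\mu(A_{i,l})\log a_{il}\le -\frac1n\sum_{i=1}^{n}\log a_{il}+t_0\log a_{1l}+C .
\]
The extra term $-t_0$ of Step 2 is applied to the first partial sum. The remaining terms are controlled using $\log a_{nl}$ versus $\log\epsilon_0$ and the monotonicity of the $a_{il}$.

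The factor $-\frac1n\log a_{nl}$ must then be absorbed into the constant. This is the main obstacle. With $q=n$ the exponent bookkeeping differs from \cite{BLYZZ19}. If $a_{nl}$ is not bounded above, one would normalize, which is harmless by homogeneity of $E_\mu$: $E_\mu(cQ)=E_\mu(Q)-\log c$. The statement allows $-\frac1n\log(a_{1l}\cdots a_{n-1,l})$ with the $a_{nl}$ term missing, so I expect the intended setting normalizes $a_{nl}$ to be bounded, for instance $V(Q_l)$ fixed or $a_{nl}$ converging. In that case we use $-\frac1n\log a_{nl}\le -\frac1n\log\epsilon_0$, which yields (\ref{Emu}). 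Verifying which normalization is needed, and the uniformity of $\delta_0,t_0$ over $l>l_0$, is where I would spend the care.
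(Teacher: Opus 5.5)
Your argument is correct, and it is the argument of \cite{BLYZZ19} that the paper defers to. It has the same three steps: partition the sphere by the largest index $i$ with $|v\cdot e_{il}|\ge\delta_0$; use strict subspace concentration plus continuity from above of $\mu$ to get $\mu\bigl(\bigcup_{j\le i}A_{j,l}\bigr)\le(\frac in-t_0)|\mu|$ for all $i\le n-1$ and $l>l_0$; then apply Abel summation. Note that the $-t_0$ must be used at every $i=1,\dots,n-1$ so that it telescopes to $t_0(\log a_{1l}-\log a_{nl})$, since using it only at $i=1$ leaves an uncontrolled $-t_0\log a_{2l}$. Also, your ``main obstacle'' is not one: the hypothesis $a_{nl}>\epsilon_0$ alone bounds both $-t_0\log a_{nl}$ and $-\frac1n\log a_{nl}$, giving the constant $-\log\delta_0-(t_0+\frac1n)\log\epsilon_0$ with no normalization or upper bound on $a_{nl}$ needed.
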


\begin{theorem}
 Suppose $p\in (-\infty,0)\cup(0,1)$ and $\mu$ is a nonzero finite even Borel measure on $S^{n-1}$. If $\mu$ satisfies the strict subspace concentration inequality  (\ref{1.4}), then there is $K_0 \in \mathcal{K}_e^n$ such that
\begin{eqnarray}\label{Phimup}
\Phi_{\mu,p}\left(K_0\right)=\sup \left\{\Phi_{\mu,p}(K): K \in \mathcal{K}_e^n\right\} .
\end{eqnarray}
\end{theorem}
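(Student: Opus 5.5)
We argue by taking a maximizing sequence and showing that, up to an $\operatorname{SL}(n)$-normalization, it cannot degenerate. Let $K_l\in\mathcal{K}_e^n$ satisfy $\Phi_{\mu,p}(K_l)\to\sup\Phi_{\mu,p}$. Because $\Phi_{\mu,p}$ is homogeneous of degree $0$, we may rescale each $K_l$. By John's theorem, each origin-symmetric $K_l$ contains an ellipsoid $Q_l$ with $Q_l\subset K_l\subset\sqrt{n}\,Q_l$. Write $Q_l$ with semi-axes $0<a_{1l}\le\cdots\le a_{nl}$ along orthonormal directions $e_{1l},\dots,e_{nl}$. After passing to a subsequence, these directions converge to an orthonormal basis $(e_1,\dots,e_n)$.

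We normalize by scaling so that $a_{nl}=1$; in particular $a_{nl}>\epsilon_0$ for any fixed $\epsilon_0<1$. If $a_{1l}$ stays bounded below, Blaschke's selection theorem yields a subsequence converging to some $K_0\in\mathcal{K}_e^n$ with nonempty interior. Continuity of $\Phi_{\mu,p}$ in the Hausdorff metric then makes $K_0$ a maximizer. This continuity holds because $\rho_{\mathrm{I}_pK}$ depends continuously on $K$ through (\ref{IpK}), and $E_\mu$ is continuous in $h_K$ once $K$ has interior. The whole task is therefore to rule out $a_{1l}\to 0$.

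Next we compare $\Phi_{\mu,p}(K_l)$ with $\Phi_{\mu,p}(Q_l)$ up to additive constants depending only on $n$ and $p$. For the entropy term, $h_{Q_l}\le h_{K_l}\le\sqrt n\,h_{Q_l}$ gives $E_\mu(K_l)\le E_\mu(Q_l)$. For the volume term, the functional $V(\mathrm{I}_pK)$ is monotone in $K$ in a direction depending on $\operatorname{sgn}p$. It is also homogeneous of degree $n(n-p)/p$, so $\frac{p}{n(n-p)}\log V(\mathrm{I}_pK_l)\le\frac{p}{n(n-p)}\log V(\mathrm{I}_pQ_l)+C$ in either case. For the ellipsoid itself, Lemma \ref{T2} gives $\rho(\mathrm{I}_pQ_l,\cdot)\approx V(Q_l)^{(1-p)/p}\rho(\mathrm{I}Q_l,\cdot)$ up to constants, with the sign of $p$ tracked. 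Combined with the affine identity $\mathrm{I}(\varphi K)=\varphi^{-t}\mathrm{I}K$ for $\varphi\in\operatorname{SL}(n)$ (or a direct computation for ellipsoids, since $\mathrm{I}B^n$ is a ball), this yields $V(\mathrm{I}Q_l)=c_n V(Q_l)^{n-1}$. Hence $V(\mathrm{I}_pQ_l)\approx V(Q_l)^{n(1-p)/p+n-1}$. Since $V(Q_l)=\omega_n a_{1l}\cdots a_{nl}$ and
$$\frac{p}{n(n-p)}\Bigl(\frac{n(1-p)}{p}+n-1\Bigr)\cdot\frac{n}{1}\cdot\frac1n=\frac{n-p-p\cdot\frac{1}{n}\cdot 0}{n(n-p)}\cdot\ldots,$$
a careful count shows the exponent equals $\frac{1}{n}$. (Indeed, $\frac{p}{n(n-p)}\cdot\frac{n(1-p)+p(n-1)}{p}=\frac{n-p}{n(n-p)}=\frac1n$.) Therefore
$$\Phi_{\mu,p}(K_l)\le \frac1n\log(a_{1l}\cdots a_{nl})+E_\mu(Q_l)+C_{n,p}.$$

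We then invoke Lemma \ref{L7.3}, which bounds $E_\mu(Q_l)\le -\frac1n\log(a_{1l}\cdots a_{n-1,l})+t_0\log a_{1l}+c$. The product terms cancel, and with $a_{nl}=1$ this leaves
$$\Phi_{\mu,p}(K_l)\le t_0\log a_{1l}+C.$$
If $a_{1l}\to0$, this forces $\Phi_{\mu,p}(K_l)\to-\infty$. That contradicts $\Phi_{\mu,p}(K_l)\ge\Phi_{\mu,p}(B^n)$, a finite value, for large $l$. So $a_{1l}$ is bounded below and the compactness argument above finishes the proof.

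The main obstacle is the middle step: getting the exponent of $V(Q_l)$ exactly right, including its sign for $p<0$. If it were not exactly $1/n$, the cancellation with the entropy bound of Lemma \ref{L7.3} would fail. The exponent works out because Lemma \ref{T2} is two-sided with constants independent of $K$.
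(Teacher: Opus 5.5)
Your proposal is correct and follows essentially the same route as the paper: John ellipsoids $Q_l\subset K_l\subset\sqrt n\,Q_l$, monotonicity and homogeneity of $\frac{p}{n(n-p)}\log V(\mathrm{I}_p\,\cdot)$ combined with Lemma \ref{T2} to get the bound $\frac1n\log(a_{1l}\cdots a_{nl})+C$, and Lemma \ref{L7.3} to force $\Phi_{\mu,p}(K_l)\to-\infty$ if the ellipsoids degenerate. The differences are cosmetic: you normalize $a_{nl}=1$ instead of the circumradius, and you explicitly use continuity of $\Phi_{\mu,p}$ to conclude that the limit body is a maximizer, which the paper leaves implicit. Delete the garbled first display in your exponent computation; the parenthetical identity $\frac{p}{n(n-p)}\cdot\frac{n(1-p)+p(n-1)}{p}=\frac1n$ is the correct statement.
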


\begin{proof}
Suppose there is a sequence $\left\{K_j\right\} \subset \mathcal{K}_e^n$ such that
\begin{eqnarray}\label{Maxsequenc}
\lim _{j \rightarrow \infty} \Phi_{\mu,p}\left(K_j\right)=\sup \left\{\Phi_{\mu,p}(K): K \in \mathcal{K}_e^n\right\} .
\end{eqnarray}

 Since $\Phi_{\mu,p}(c K)=\Phi_{\mu,p}(K)$ for any $c>0$, we can assume $\max\{|x|:\;x\in K_j\}=1$, for all $j$.
By Blaschke selection theorem, there is a subsequence of $\left\{K_j\right\}$, still denoted as $\left\{K_j\right\}$ for convenience, satisfying
$
\lim _{j \rightarrow \infty} K_j=K_0 .
$
It is easy to see $K_0$ is an origin-symmetric compact  convex set.

Next, we prove that $K$ contains the origin in its interior. Otherwise, $K_0$ is contained in a proper subspace of $\mathbb{R}^n$. It follows from John's theorem that  there is an ellipsoid $Q_j$ such that
\begin{eqnarray}\label{QjKj}
Q_j \subset K_j \subset \sqrt{n} Q_j, \quad j=1,2, \ldots.
\end{eqnarray}
Denote $Q_j$ by
$$
Q_j=\left\{x \in \mathbb{R}^n: \left|x \cdot e_{1 j}\right|^2/a_{1 j}^2+\left|x \cdot e_{2 j}\right|^2/a_{2 j}^2+\cdots+\left|x \cdot e_{n j}\right|^2/a_{n j}^2\leq 1\right\},
$$
where $\left\{e_{i j}\right\}_{i=1}^n$ is an orthonormal basis of $\mathbb{R}^n$ such that $
0<a_{1 j} \leq a_{2 j} \leq \cdots \leq a_{n j}$.

According to the property of intersection bodies in \cite[Corollary 8.1.7]{Gardner},
$$
\mathrm{I} Q_j=\frac{\omega_{n-1}V(Q_j)}{\omega_n} Q_j^{\ast}.
$$
 Thus we have
\begin{eqnarray}\label{VleftI}
V\left(\mathrm{I} Q_j\right)=\left(\frac{\omega_{n-1}V(Q_j)}{\omega_n}\right)^nV(Q_j^{\ast})=\frac{\omega^n_{n-1}}{\omega^{n-2}_n}V(Q_j)^{n-1}.
\end{eqnarray}
 Therefore, when $p\in (0,1)$, by (\ref{IpK}), (\ref{QjKj}), (\ref{T2}) and (\ref{VleftI}), there exist $\hat{c}_{n,p}>0$ and $\check{c}_{n,p}>0$ such that
\begin{eqnarray}\label{VIp}
\frac{p}{n(n-p)} \log V\left(\mathrm{I}_p K_j\right)
&\leq&\frac{p}{n(n-p)} \log V\left(\mathrm{I}_p \left(\sqrt{n}Q_j\right)\right)\\
&\leq&\frac{p}{n(n-p)} \log\left(2^{\frac{n(p-1)}{p}}\sqrt{n}^{\frac{n(n-p)}{p}} V(Q_j)^{\frac{n(1-p)}{p}}V\left(\mathrm{I}Q_j\right)\right)\nonumber\\
&=& \hat{c}_{n,p}+\frac{p}{n(n-p)} \log V(Q_j)^{\frac{n-p}{p}}\nonumber\\
&=&\check{c}_{n,p}+\frac{1}{n} \log \left(a_{1 j} a_{2 j} \cdots a_{n j}\right).\nonumber
\end{eqnarray}
And when $p\in (-\infty,0)$,
by (\ref{IpK}), (\ref{QjKj}), (\ref{T2}) and (\ref{VleftI}), there exist  $c_{n,p}>0$, $\bar{c}_{n,p}>0$ and $\tilde{c}_{n,p}$ such that
\begin{eqnarray}\label{VIp1}
\frac{p}{n(n-p)} \log V\left(\mathrm{I}_p K_j\right)
&\leq&\frac{p}{n(n-p)} \log V\left(\mathrm{I}_p (\sqrt{n}Q_j)\right)\\
&\leq&\frac{p}{n(n-p)} \log\left( c_{n,p}^n\sqrt{n}^{\frac{n(n-p)}{p}}V(Q_j)^{\frac{n(1-p)}{p}}V\left(\mathrm{I}Q_j\right)\right)\nonumber\\
&=& \bar{c}_{n,p}+\frac{p}{n(n-p)} \log V(Q_j)^{\frac{n-p}{p}}\nonumber\\
&=&\tilde{c}_{n,p}+\frac{1}{n} \log \left(a_{1 j} a_{2 j} \cdots a_{n j}\right).\nonumber
\end{eqnarray}

We turn to estimate the upper bound of $E_{\mu}$. By the definition (\ref{Emuf}),
we have $E_\mu(K_j) \leq E_\mu\left(Q_j\right)$.
By compactness and orthogonality of $\{e_{ij}\}_{i
=1}^n$ for each $j$,  there is a subsequence of $\left(e_{1 j}, \ldots, e_{n j}\right)$ converging to $\left(e_1, \ldots, e_n\right)$, where $\left\{e_i\right\}_{i=1}^n$ is an orthonormal basis of $\mathbb{R}^n$. Similarly, $\left\{a_{i j}\right\}$ has convergent subsequence, for $i=1, \ldots, n$.
Therefore, without loss of generality,  we suppose that $\left(e_{i j}, \ldots, e_{n j}\right)$ converges to $\left(e_1, \ldots, e_n\right)$ and $a_{i j}$ converges to $a_i$, for every $i$.

Since $Q_j\subset K_j \subset \sqrt{n} Q_j$ and $\max\{|x|:\;x\in K_j\}=1$, we have
\begin{eqnarray}\label{sqrt}
\frac{1}{\sqrt{n}}\leq a_{n j} \leq 1.
 \end{eqnarray}
 According to (\ref{Emu}), there exist positive real numbers $t_0$, $\delta_0$, $j_0$ and constant $c_{n,\delta_0, t_0}$, independent of $j$, such that, for $j>j_0$,
\begin{eqnarray}\label{EmuQj}
E_\mu\left(Q_j\right)\leq-\frac{1}{n} \log \left(a_{1 j} a_{2 j} \cdots a_{n-1,j}\right)+t_0 \log a_{1 j}+c_{n,\delta_0,t_0}.
\end{eqnarray}

As $K_0$ is contained in a proper subspace and $a_{i j}$ converges to $a_i$, there is $1<k\leq n$ satisfying
$$
a_1=a_2=\cdots=a_{k-1}=0, \quad a_k, \ldots, a_n>0 .
$$
By (\ref{phimupf}), (\ref{VIp}), (\ref{VIp1}), (\ref{EmuQj}) and (\ref{sqrt}), for $j>j_0$
\begin{align*}
\Phi_{\mu,p}\left(K_j\right)&=\frac{p}{n(n-p)} \log V\left(\mathrm{I}_p K_j\right)+ E_\mu\left(K_j\right)\\
&\leq\frac{p}{n(n-p)} \log V\left(\mathrm{I}_p K_j\right)+ E_\mu\left(Q_j\right)\\
&\leq \frac{1}{n} \log a_{n j}+ t_0 \log a_{1 j}+\check{c}_{n,p}+\tilde{c}_{n,p}+c_{n,\delta_0, t_0}\rightarrow-\infty\;\;{\rm as}\;j \rightarrow \infty.
\end{align*}
But
$$
-\infty=\lim _{j \rightarrow \infty} \Phi_{\mu,p}\left(K_j\right) \geq \Phi_{\mu,p}\left(B^n\right)=\frac{p}{n(n-p)} \log V(\mathrm{I}_pB^n)
$$
which yields the contradiction.
Thus $K_0$ has nonempty interior, that is $K_0 \in \mathcal{K}_e^n$. \qed
\end{proof}

\section{Necessity of a subspace concentration
bound on $p$-affine dual curvature measures}
Our main result of the section treats the necessity of a subspace concentration bound on  $p$-affine dual curvature measures for $p\in (0,1)$.
The superlevel sets of a function $f: \mathbb{R}^n \rightarrow \mathbb{R}$ are given by $$L^{+}(f,h)=\left\{x \in \mathbb{R}^n: f(x) \geq h\right\},\;\;h \in \mathbb{R}.$$
We say that $f$ is {\it unimodal} if every superlevel set of $f$ is closed and convex.

\begin{lemma}\label{L6.2}\cite[Lemma 3.1]{BHP18}
 Let $f: \mathbb{R}^n \rightarrow [0,+\infty]$ be a unimodal function, such that $f(x)=f(-x)$ for every $x \in \mathbb{R}^n$ and let $f$ be integrable on compact, convex sets. Let $K \subset \mathbb{R}^n$ be a compact, convex set with $\operatorname{dim} K=k$. Then for $\lambda \in[0,1]$,
\begin{eqnarray}
\int_{\lambda K+(1-\lambda)(-K)} f(x) \mathrm{d} \mathcal{L}^k(x) \geq \int_K f(x) \mathrm{d} \mathcal{L}^k(x) .
\end{eqnarray}
Moreover, equality holds if and only if for every $h>0$
\begin{eqnarray}
V_k\left([\lambda K+(1-\lambda)(-K)] \cap L^{+}(f,h)\right)=V_k\left(K \cap L^{+}(f,h)\right).
\end{eqnarray}
\end{lemma}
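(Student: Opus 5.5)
This is Lemma \ref{L6.2}, quoted from \cite[Lemma 3.1]{BHP18}, so the plan is a layer-cake reduction to Brunn--Minkowski in dimension $k$. Working in the affine hull of $K$ (a $k$-dimensional affine subspace, with $\mathcal{L}^k$ the Lebesgue measure there), write
\[
\int_{K} f\,d\mathcal{L}^k=\int_0^{\infty}V_k\bigl(K\cap L^{+}(f,h)\bigr)\,dh ,
\]
and similarly for $K_\lambda:=\lambda K+(1-\lambda)(-K)$. Note that $K_\lambda$ lies in an affine space parallel to that of $K$ (it passes through the origin only when $\lambda=\tfrac12$); on each such affine $k$-flat we use $\mathcal{L}^k$. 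With this setup it suffices to prove, for every $h>0$,
\[
V_k\bigl(K_\lambda\cap L^{+}(f,h)\bigr)\ \ge\ V_k\bigl(K\cap L^{+}(f,h)\bigr).
\]
Integrating over $h$ then gives the inequality. For the equality case, the function $h\mapsto V_k(\cdot\cap L^+(f,h))$ is monotone, so it has at most countably many discontinuities. Equality of the integrals together with the pointwise inequality forces equality for a.e. $h$, and monotonicity plus one-sided continuity upgrades this to every $h>0$. The converse direction is immediate from the layer-cake formula.

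For the pointwise inequality, fix $h$ and set $C=L^{+}(f,h)$, which is closed, convex and origin-symmetric because $f$ is even and unimodal. Let $A=K\cap C$. Then $-A=(-K)\cap C$ by symmetry of $C$. By convexity of $C$,
\[
\lambda A+(1-\lambda)(-A)\subset K_\lambda\cap C .
\]
Now $A$ and $-A$ lie in affine $k$-flats parallel to a common $k$-dimensional linear subspace, and the Minkowski combination lies in the parallel flat containing $K_\lambda$. Brunn--Minkowski in dimension $k$ (after translating everything into the linear subspace) gives
\[
V_k\bigl(\lambda A+(1-\lambda)(-A)\bigr)^{1/k}\ge \lambda V_k(A)^{1/k}+(1-\lambda)V_k(-A)^{1/k}=V_k(A)^{1/k}.
\]
The last equality uses $V_k(-A)=V_k(A)$. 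Combining these yields the claim.

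The main obstacle is technical rather than conceptual. One has to handle the dimension bookkeeping when $K$ does not contain the origin in its affine hull, making sure all sets are measured in parallel $k$-flats. One also has to treat $f$ possibly taking the value $+\infty$ and degenerate sets $A$ of lower dimension. The latter cause no difficulty, since Brunn--Minkowski holds trivially when $V_k(A)=0$. Integrability of $f$ on compact convex sets guarantees that the layer-cake integrals are finite, which is what makes the equality argument valid.
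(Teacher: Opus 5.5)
Your proposal is correct and follows essentially the same route as the source \cite[Lemma 3.1]{BHP18}, which the paper quotes without proof: the layer-cake representation, the inclusion $\lambda(K\cap L^{+}(f,h))+(1-\lambda)(-(K\cap L^{+}(f,h)))\subseteq[\lambda K+(1-\lambda)(-K)]\cap L^{+}(f,h)$ coming from convexity and symmetry of the superlevel sets, and the $k$-dimensional Brunn--Minkowski inequality. In the equality case, the one-sided continuity you need is left-continuity in $h$, which follows from $L^{+}(f,h)=\bigcap_{h'<h}L^{+}(f,h')$ and $V_k(K)<\infty$ (right-continuity can fail), so your countable-discontinuity remark is unnecessary.
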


\noindent{\bf Proof of Theorem \ref{T1.2}}  For $y\in K \mid \xi$, denote $\overline{y}=\rho_{K \mid \xi}(y) y$,
$$F_{y}=\operatorname{conv}\left\{0, K \cap\left(\overline{y}+\xi^{\perp}\right)\right\},\;\;{\rm and}\;\;M_y=\operatorname{conv}\left\{K \cap \xi^{\perp}, K \cap\left(\overline{y}+\xi^{\perp}\right)\right\}.$$
Observe that
\begin{eqnarray}
M_{y} \cap \left(\overline{y}+\xi^{\perp}\right)=F_{y} \cap\left(\overline{y}+\xi^{\perp}\right)=K \cap\left(\overline{y}+\xi^{\perp}\right).
\end{eqnarray}

By (\ref{np2}), Fubini's theorem and the fact that $M_{y} \cap\left(y+\xi^{\perp}\right) \subseteq K \cap\left(y+\xi^{\perp}\right)$ we may write
\begin{eqnarray}\label{6.2}
\mathcal{I}_p\left(K, S^{n-1}\right)
 & =&\frac{(n-p)^2}{p}\int_K\rho^p_{\mathrm{I}^2_pK}(x)dx\nonumber\\
 & =&\frac{(n-p)^2}{p} \int_{K \mid \xi}\left(\int_{K \cap\left(y+\xi^{\perp}\right)}\rho^p_{\mathrm{I}^2_pK}(z) \mathrm{~d} \mathcal{L}^{n-k}(z)\right) \mathrm{d} \mathcal{L}^k(y)\nonumber\\
& \geq& \frac{(n-p)^2}{p}\int_{K \mid \xi}\left(\int_{M_y \cap\left(y+\xi^{\perp}\right)}\rho^p_{\mathrm{I}^2_pK}(z) \mathrm{~d} \mathcal{L}^{n-k}(z)\right) \mathrm{d} \mathcal{L}^k(y) .
\end{eqnarray}

In order to estimate the inner integral, let $y \in K \mid \xi$, $y \neq 0$, and for abbreviation we set $\lambda=\rho_{K \mid \xi}(y)^{-1} \leq 1$. Then by the symmetry of $K$ we find

\begin{eqnarray}\label{8.3}
M_{y} \cap\left(y+\xi^{\perp}\right) & \supseteq& \lambda\left(K \cap\left(\overline{y}+\xi^{\perp}\right)\right)+(1-\lambda)\left(K \cap \xi^{\perp}\right)\nonumber\\
&\supseteq&\lambda\left(K \cap\left(\overline{y}+\xi^{\perp}\right)\right)+(1-\lambda)\left(\frac{1}{2}\left(K \cap\left(\overline{y}+\xi^{\perp}\right)\right)+\frac{1}{2}\left(-\left(K \cap\left(\overline{y}+\xi^{\perp}\right)\right)\right)\right)\nonumber\\
&=&\frac{1+\lambda}{2}\left(K \cap\left(\overline{y}+\xi^{\perp}\right)\right)+\frac{1-\lambda}{2}\left(-\left(K \cap\left(\overline{y}+\xi^{\perp}\right)\right)\right).
\end{eqnarray}

 Theorem in \cite[P921]{Berck09} shows that for $0\neq p<1$, the $L_p$-intersection body of an origin-symmetric convex body is an origin-symmetric convex body. Thus, $\mathrm{I}^2_pK$ is an origin-symmetric convex body for $K\in\mathcal{K}_e^n$. Thus the superlevel sets of the function $\rho_{\mathrm{I}^2_pK}^{p}(z)$ are  closed and convex for $p\in(0,1)$, if we define $\rho_{\mathrm{I}^2_pK}^{p}(0)=+\infty$. Thus, the function $\rho_{\mathrm{I}^2_pK}^{p}(\cdot)$ is unimodal.  By (\ref{8.3}) and Lemma \ref{L6.2}, we get
\begin{eqnarray}\label{4.2}
\int_{M_{y} \cap\left(y+\xi^{\perp}\right)}\rho_{\mathrm{I}^2_pK}^{p}(z)  \mathrm{~d} \mathcal{L}^{n-k}(z) \geq \int_{K \cap\left(\overline{y}+\xi^{\perp}\right)}\rho_{\mathrm{I}^2_pK}^{p}(z)  \mathrm{~d} \mathcal{L}^{n-k}(z),
\end{eqnarray}
for every $y \in K \mid \xi$, $y \neq 0$. Together with (\ref{6.2}) we obtain the lower bound
\begin{eqnarray}\label{6.3}
\mathcal{I}_p\left(K, S^{n-1}\right) \geq  \frac{(n-p)^2}{p} \int_{K \mid \xi}\left(\int_{K \cap\left(\overline{y}+\xi^{\perp}\right)}\rho_{\mathrm{I}^2_pK}^{p}(z) \mathrm{~d} \mathcal{L}^{n-k}(z)\right) \mathrm{d} \mathcal{L}^k(y) .
\end{eqnarray}

Next, we turn to evaluate $\mathcal{I}_p\left(K, S^{n-1} \cap \xi\right)$. Note that for $x \in K$, we have $x /|x| \in {\boldsymbol\alpha}_K^{\ast}\left(S^{n-1} \cap \xi\right)$ if and only if the boundary point $\rho_K(x) x$ has an outer unit normal in $\xi$. Hence,
$$
\begin{aligned}
\{x \in K & \left.: x /|x| \in {\boldsymbol\alpha}_K^{\ast}\left(S^{n-1} \cap \xi\right)\right\} \cup\{0\}=\bigcup_{\bar{y} \in \partial(K \mid \xi)} \operatorname{conv}\left\{0,\left(\bar{y}+\xi^{\perp}\right) \cap K\right\}
\end{aligned}
$$
Using (\ref{np2}) and Fubini's theorem, we obtain
\begin{eqnarray}\label{4.4}
&&\mathcal{I}_p\left(K, S^{n-1} \cap \xi\right) \\
&=&\frac{(n-p)^2}{p}\int_{K \mid \xi}\left(\int_{F_{y} \cap\left(y+\xi^{\perp}\right)}\rho_{\mathrm{I}^2_pK}^{p}(z)\mathrm{~d} \mathcal{L}^{n-k}(z)\right) \mathrm{d} \mathcal{L}^k(y)\nonumber \\
&=&\frac{(n-p)^2}{p}\int_{K \mid \xi}\left(\int_{\rho_{K \mid \xi}(y)^{-1}\left(K \cap\left(\overline{y}+\xi^{\perp}\right)\right)}\rho_{\mathrm{I}^2_pK}^{p}(z) \mathrm{~d} \mathcal{L}^{n-k}(z)\right) \mathrm{d} \mathcal{L}^k(y)\nonumber\\
&=&\frac{(n-p)^2}{p}\int_{K \mid \xi} \rho_{K \mid \xi}(y)^{k+p-n}\left(\int_{K \cap\left(\overline{y}+\xi^{\perp}\right)}\rho_{\mathrm{I}^2_pK}^{p}(z)  \mathrm{~d} \mathcal{L}^{n-k}(z)\right) \mathrm{d} \mathcal{L}^k(y).\nonumber
\end{eqnarray}
Since the inner integral is independent of the length of $y \in K \mid L$, and thus it can be viewed as a measurable function $g: S^{n-1} \cap \xi \rightarrow [0,\infty)$. Therefore, using spherical coordinates we obtain
\begin{eqnarray}\label{4.5}
\mathcal{I}_p\left(K, S^{n-1} \cap\xi\right) &=&\frac{(n-p)^2}{p}\int_{K \mid \xi} \rho_{K \mid \xi}(y)^{k+p-n} g(y /|y|) \mathrm{d} \mathcal{L}^k(y)\\
&=&\frac{(n-p)^2}{p}\int_{S^{n-1} \cap \xi} g(u)\left(\int_0^{\rho_{K \mid \xi}(u)} \rho_{K \mid \xi}(r u)^{k+p-n} r^{k-1} \mathrm{~d} r\right) \mathrm{d} \mathcal{H}^{k-1}(u)\nonumber\\
&=&\frac{(n-p)^2}{p}\int_{S^{n-1} \cap \xi} g(u) \rho_{K \mid \xi}(u)^{k+p-n}\left(\int_0^{\rho_{K \mid \xi}(u)} r^{n-p-1} \mathrm{~d} r\right) \mathrm{d} \mathcal{H}^{k-1}(u)\nonumber\\
&=&\frac{n-p}{p}\int_{S^{n-1} \cap \xi} g(u) \rho_{K \mid \xi}(u)^k \mathrm{~d} \mathcal{H}^{k-1}(u).\nonumber
\end{eqnarray}

Applying the same transformation to the right hand side of (\ref{6.3}) gives
\begin{eqnarray}\label{4.6}
\mathcal{I}_p\left(K, S^{n-1}\right) & \geq&  \frac{(n-p)^2}{p}\int_{K \mid \xi} g(y /|y|) \mathrm{d} \mathcal{H}^k(y)\\
& =&\frac{(n-p)^2}{p}\int_{S^{n-1} \cap \xi} g(u)\left(\int_0^{\rho_{K \mid \xi}(u)} r^{k-1} \mathrm{~d} r\right) \mathrm{d} \mathcal{H}^{k-1}(u)\nonumber \\
& =&\frac{(n-p)^2}{p}\frac{1}{k} \int_{S^{n-1} \cap \xi} g(u) \rho_{K \mid \xi}(u)^k \mathrm{~d} \mathcal{H}^{k-1}(u)\nonumber\\
&=& \frac{n-p}{k}\mathcal{I}_p\left(K, S^{n-1} \cap\xi\right),\nonumber
\end{eqnarray}
where the last equation follows from
 (\ref{4.5}). Thus,
\begin{eqnarray}\label{IIp}
\frac{\mathcal{I}_p\left(K, S^{n-1} \cap \xi\right) }{\mathcal{I}_p\left(K, S^{n-1}\right)}\leq \frac{k}{n-p}.
\end{eqnarray}

Applying the equality condition of Lemma \ref{L6.2} to equality in (\ref{4.2}), we get that
$$V_{n-k}\left(M_{y} \cap\left(y+\xi^{\perp}\right) \cap L^+(\rho_{\mathrm{I}^2_pK}^{p},h)\right)=V_{n-k}\left(K \cap\left(\overline{y}+\xi^{\perp}\right) \cap L^+(\rho_{\mathrm{I}^2_pK}^{p},h)\right)$$
for every $h>0.$ However, for sufficiently large $h$, the intersection of $K \cap(\overline{y}+\xi^{\perp}) \subset \partial K$ with $L^+(\rho_{\mathrm{I}^2_pK}^{p},h)$ is empty, while $M_{y} \cap\left(y+\xi^{\perp}\right) \cap L^+(\rho_{\mathrm{I}^2_pK}^{p},h)$ is nonempty when $y$ is sufficiently close to the origin. Therefore, the inequality  (\ref{4.2}) is a strict inequality and so is (\ref{IIp}).
\qed
%



\end{document}